\renewcommand{\Im}{\operatorname{Im}}
\newcommand{\C}{\mathbb{C}}
\renewcommand{\epsilon}{\varepsilon}
\renewcommand{\Im}{\operatorname{Im}}
\let\deg\relax
\DeclareMathOperator{\deg}{deg}
\DeclareMathOperator{\reg}{reg}
\DeclareMathOperator{\sng}{sng}
\newcommand{\cX}{\mathcal{X}}
\newcommand{\cV}{\mathcal{V}}
\newcommand{\fe}{\mathfrak{e}}
\newcommand{\uv}{\underline v}
\newcommand{\um}{\underline m}
\newcommand{\car}{\mathop{\rm{Car}}\nolimits}
\newcommand{\Car}{\mbox{\rm Car}}
\def\cZ{{\mathcal Z}}
\newcommand{\cB}{{\mathcal{B}}}
\newcommand{\cD}{{\mathcal{D}}}
\newcommand{\cN}{{\mathcal{N}}}
\newcommand{\cS}{{\mathcal{S}}}
\newcommand{\cL}{{\mathcal{L}}}
\newcommand{\cP}{{\mathcal{P}}}
\newcommand{\IC}{{\mathbb{C}}}
\newcommand{\uz}{{\underline{z}}}
\def\beeq{\begin{equation}}
\def\eneq{\end{equation}}
\newcommand{\Res}{\mathop{\rm{Res}}}
\newcommand{\bse}{\begin{equation}}
\newcommand{\vvec}{{\underline v}}
\newcommand{\be}{\begin{eqnarray}}
\newcommand{\ee}{\end{eqnarray}}
\newcommand{\tor}{{\mathbb T}}
\newcommand{\Z}{{\mathbb Z}}
\newcommand{\mes}{{\rm mes}}
\newtheorem{theorem}{Theorem}[section]
\newtheorem{thma}{Theorem}
\newtheorem{thmb}{Theorem}
\newtheorem{thmc}{Theorem}
\newtheorem{lemma}[theorem]{Lemma}
\newtheorem{prop}[theorem]{Proposition}
\theoremstyle{definition}
\newtheorem{defi}[theorem]{Definition}
\newtheorem{example}[theorem]{Example}
\theoremstyle{remark}
\newtheorem{remark}[theorem]{Remark}
\numberwithin{equation}{section}
\begin{document}
\title
[Cartan Covers and Doubling Bernstein Type Inequalities]
{Cartan Covers and Doubling Bernstein Type Inequalities
on Analytic Subsets of $\mathbb{C}^2$}
\date{}
\author{ Michael Goldstein, Wilhelm Schlag, Mircea Voda}

\address{Department of Mathematics, University of Toronto,
  Toronto, Ontario, Canada M5S 1A1}
\email{gold@math.toronto.edu}

\address{Department of Mathematics, The University of Chicago, 5734 S. University Ave., Chicago, IL 60637, U.S.A.}
\email{schlag@math.uchicago.edu}

\address{Department of Mathematics, The University of Chicago, 5734 S. University Ave., Chicago, IL 60637, U.S.A.}
\email{mvoda@uchicago.edu}

\thanks{The second author was partially supported by the NSF, DMS-1500696. The first author thanks the University of Chicago for its hospitality during the months of July and August of 2016. The authors are grateful to J\'anos Koll\'ar and Mihnea Popa for helpful discussions on B\'ezout's theorem. }

\begin{abstract}
We prove a version of the
doubling Bernstein inequalities
for the trace of an analytic function of two variables on an analytic subset of $\mathbb{C}^2$.
The estimate applies to the whole analytic set in question
including its singular points.
The proof relies on a version of the Cartan estimate for maps in $\mathbb{C}^2$ which we establish in this work.
\end{abstract}

\maketitle\tableofcontents

%%%%%%%%%%%%%%%%%%%%%%%%%%%%%%%%%%%%%%%%%%%%%%%%%%%%%%%%%%%%%%%%%%%%%%%%%%%%%%%%%%%%%%%%%%%%%%%
%%%%%%%%%%%%%%%%%%%%%% SECTION 1 %%%%%%%%%%%%%%%%%%%%%%%%%%%%%%%%%%%%%%%%%%%%%%%%%%%%%%%%%%%%%%

\section{Introduction}\label{Sec:intro}
In a series of papers \cite{FefNar93},\cite{FefNar94},\cite{FefNar96}, Fefferman and Narasimhan investigated the local behavior of a
polynomial $f$ of $N$ real or complex variables, restricted to a  given
$n$-dimensional algebraic variety $\cX$.  Conceptually, the problem is to quantify to what extent
 the local behavior of the trace of $f$ on $\cX$ deteriorates relative to
an  $N$-dimensional ball. Of particular interest here is to determine the
dependence of quantitative estimates on the degree of the
polynomials.
Fefferman and Narasimhan chose the classical Bernstein inequalities for polynomials of several variables
to measure the distortion of a polynomial restricted to an algebraic variety.

The authors' interest in this particular problem arose as part of their work on the  Chulaevsky-Sinai conjecture.
In their pioneering paper \cite{ChuSin89}, Chulaevsky and Sinai  analyze the  spectrum of a discrete Schr\"odinger operator on $\Z$ with a quasi-periodic potential
given by evaluating a generic smooth function on $\tor^{2}$ along the orbit of an ergodic shift.
In \cite{GolSchVod16a} (building on work from \cite{GolSchVod16}) the authors found  that some ``generic
versions" of these restricted Bernstein estimates play a crucial role in addressing this conjecture.

\smallskip

There are two major differences between the current paper and \cite{FefNar96}: (i) we obtained estimates at
singular points and the estimates at regular points don't depend on the distance to the singular points
(ii) we allow analytic functions and analytic sets in place of polynomials and algebraic varieties.

As for (i),  Fefferman and Narasimhan had considered compact subsets of algebraic varieties 
away from the singular points. For polynomials and algebraic varieties, Roytwarf and Yomdin~
\cite{RoyYom97} extended their Bernstein estimates to be independent of the distance to the singular points.
However, the aforementioned spectral analysis forces us to consider analytic functions and sets, rather than algebraic ones.
Our estimates for analytic functions are not as sharp as for
polynomials.  

Regarding (ii), we note that Coman-Poletsky \cite{ComPol07} (for $n=1$) and Brudnyi
\cite{Bru08} (for all $n\ge1$) studied Bernstein estimates (amongst other local properties)
for the restriction of analytic functions of $n+1$ variables to the graph of an analytic function of $n$ variables.
Both these papers require a certain {\em transversality condition} of the
zeros sets of the functions in question.  We shall also impose a condition of this nature in our approach.

We proceed to discuss the main results of the paper. First we need to introduce some notation related to Cartan sets
and to Bernstein exponents. The Cartan sets will appear in our transversality condition to allow the application
of the Cartan-type estimate established in \cref{thm:C}.

\begin{defi}\label{defi:cartan0sets} $(1)$ Let $H\ge 0$, $ K\ge 1 $.
  For $\cB \subset\IC^2$ we say that $\cB \in \car_{2,0}(H, K)$ if
  \[
    \cB\subset \bigcup\limits^{j_0}_{j=1} B(\uv_j, r)
  \]
  with $r=e^{-H}$ and $j_0 \le K$.

  $(2)$ Let $ f$ be analytic on the ball
  $B(\vvec_0,R)\subset \mathbb{C}^{2}$, $ \cS\subset \C^2 $, and $ \mu\in(0,1) $. Define
  \begin{gather*}
      M_f(\vvec_0,R)=\sup_{B(\vvec_0,R)}\log |f|,\quad
      M_f(\cS,\vvec_0,R)=\sup_{B(\vvec_0,R)\cap \cS}\log |f|,\\
      B_f(\mu;\vvec_0,R) =M_f(\vvec_0,R)-M_f(\vvec_0,\mu R),\\
      B_f(\mu;\cS,\vvec_0,R) =M_f(\cS,\vvec_0,R)-M_f(\cS,\vvec_0,\mu R).
  \end{gather*}
  We call $B_f(\mu;\vvec_0,R),B_f(\mu;\cS,\vvec_0,R)$ {\em Bernstein exponents}. We make the natural
  convention that if the function $ f $ vanishes identically, its Bernstein exponents are zero.

  $ (3) $ Let $ f $ be analytic on $ B(0,1) $, $ \mu\in(0,1) $. We define
  \begin{equation*}
    B_f(\mu)=\sup_{\uv_0\in B(0,1/4),0<R\le 1/4} B_f(\mu;\uv_0,R).
  \end{equation*}

  $ (4) $ Given an analytic function $ f $ on a disk $ \cD(z_0,R)\subset \C $, the quantities
  $ M_f(z_0,R) $ and $ B_f(\mu; z_0,R) $ are defined analogously to the above.
\end{defi}

\smallskip
The classical Bernstein doubling inequality for a univariate polynomial $ f $ can be expressed using the above
notation as 
\begin{equation*}
  B_f(\mu;z_0,R)\le (\log \mu^{-1})\times \deg f,
\end{equation*}
where $ \mu\in (0,1) $, $ z_0\in \C $, $ R>0 $.

\smallskip

Throughout we will impose the following {\em transversality condition}.  Suppose
the functions $f_1,f_2$ are analytic in the ball $B(0,1)\subset \C^2$,  and are
normalized so that $ M_{f_i}(0,1)\le 0 $, $ i=1,2 $. We let $ F=(f_1,f_2) $ and
we define
\begin{equation*}
  \cN_F(\epsilon):= \{ \uv\in B(0,1): |F(\uv)|<\epsilon \}.
\end{equation*}
We require that
\begin{equation}\label{eq:cartanmaincond}
  \cN_F(\exp(-H_0))\cap B(0,1/2)\in \car_{2,0}(H_1, K_1),\quad \log K_1\ll H_1,
\end{equation}
for some $H_0\gg H_1\gg B_0:=\max_i B_{f_i}(1/4)$.

\begin{remark}
  A priori it might appear that $ K_1 $ can be exponentially large, i.e.,
  $ \exp(cH_0) $ for some small $ c>0 $. However, a simple argument,
  presented in \cref{lem:K1-sa-bound}, shows that we always have the polynomial bound
  $ K_1\le H_0^C $, where $ C $ is some absolute constant.
\end{remark}

Let $\cZ=\{\uv\in B(0,1): f_2(\vvec)=0\}$.  It is well known that there exists a discrete set of singular points
$ \sng \cZ $ (relative to $ B(0,1) $) such that the set of regular points $ \reg \cZ:= \cZ\setminus \sng \cZ $ is
a one dimensional complex manifold (see, for example, \cite{Chi89}).

\begin{thma}\label{thm:A}
  Assume the transversality condition holds and let $ \cZ $ be as above.
  Let $ C_0=\log (K_1B_0^2H_0^2) $. Then the following statements hold.
  \begin{enumerate}[leftmargin=*]
  \item
    For any $ \uv_0\in B(0,1/8)\cap \cZ $ and $ 0<R\le 1/4 $,
    \begin{equation*}
      B_{f_1}(1/4;\cZ,\uv_0,R)\lesssim \max(\log R^{-1},C_0)B_0^2H_0.
    \end{equation*}
  \item
      There exists an atlas of $  \reg \cZ $ with charts defined on $ \cD(0,1) $ such that for any chart
      $ \phi $ satisfying $ \phi(\cD(0,1))\cap B(0,1/8)\neq \emptyset $ and any $ \cD(z_0,R)\subset \cD(0,1) $,
      we have
      \begin{equation*}
        B_{f_1\circ \phi}(1/4;z_0,R)\le C(f_2)C_0B_0^2H_0.
      \end{equation*}
  \end{enumerate}
\end{thma}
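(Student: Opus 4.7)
\emph{Strategy.} The plan is to combine the transversality condition \cref{eq:cartanmaincond} with the Cartan-type estimate for maps in $\C^2$ established later in the paper (Theorem~C) and deduce the doubling Bernstein bounds on $\cZ$. The crucial identity is that, because $f_2\equiv 0$ on $\cZ$,
\[
  \cZ\cap\{|f_1|<\epsilon\}=\cZ\cap\cN_F(\epsilon),
\]
so every sublevel set of $|f_1|$ on $\cZ$ is a sublevel set of $|F|$ and hence admits a Cartan cover. The input transversality at level $\exp(-H_0)$ is promoted, via the Cartan estimate for $F$, to covers $\car_{2,0}(H_L,K_L)$ of $\cN_F(e^{-L})$ for every $L\ge H_0$, with $H_L$ proportional to $L/(B_0^2 H_0)$ and $\log K_L\ll H_L$. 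This is where the factors $B_0^2 H_0$ will enter the final bounds.

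\emph{Part (2).} Weierstrass preparation applied to $f_2$, after a generic linear change of coordinates, produces an atlas of $\reg \cZ$ whose charts $\phi\colon \cD(0,1)\to \reg \cZ$ are graph-type holomorphic parameterizations. Set $g:=f_1\circ\phi$, a one-variable analytic function on $\cD(0,1)$ with $M_g(0,1)\le 0$ by normalization. The main step is to establish the matching lower bound
\[
  \sup_{|z|\le 1/4}|g(z)|\ge\exp\bigl(-C(f_2)\, C_0 B_0^2 H_0\bigr).
\]
Were this to fail, the image $\phi(\cD(0,1/4))$ would lie in a Cartan set of total $2$-dimensional Lebesgue area at most $\pi K_L e^{-2H_L}\ll 1$, contradicting Lelong's volume lower bound $\mathcal H^2(\phi(\cD(0,1/4)))\ge c$ for an absolute $c>0$. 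With both bounds in hand, the classical one-variable Jensen inequality converts $e^{-C(f_2) C_0 B_0^2 H_0}\le\sup_{|z|\le 1/4}|g|\le 1$ into the claimed doubling estimate on every subdisk of $\cD(0,1)$.

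\emph{Part (1).} Fix $\uv_0\in\cZ\cap B(0,1/8)$ and $0<R\le 1/4$, and let $L:=B_{f_1}(1/4;\cZ,\uv_0,R)$. By the definition of the Bernstein exponent, $\cZ\cap B(\uv_0,R/4)\subset\cN_F(e^{-L})$. For $L\gg C_0 B_0^2 H_0$, this containment places $\cZ\cap B(\uv_0,R/4)$ inside $\car_{2,0}(H_L,K_L)$, a union of at most $K_L$ balls of radius $e^{-H_L}$. Lelong's lower bound, valid through any point of a pure one-dimensional analytic set (with multiplicity at least one, so applicable even at singular points of $\cZ$), gives $\mathcal H^2(\cZ\cap B(\uv_0,R/4))\ge cR^2$. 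Comparing areas, $cR^2\le\pi K_L e^{-2H_L}$, and using $H_L\sim L/(B_0^2 H_0)$ together with $\log K_L\ll H_L$, one extracts
\[
  L\lesssim \max\bigl(\log R^{-1},C_0\bigr) B_0^2 H_0,
\]
as required.

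\emph{Main obstacle.} The principal technical point is the quantitative Cartan estimate for the map $F\colon \C^2\to\C^2$, taking the input transversality at level $\exp(-H_0)$ and producing Cartan covers at arbitrary finer levels $e^{-L}$ with the polynomial loss $H_L\sim L/(B_0^2 H_0)$; this is precisely the content of the Cartan part of the paper. A secondary subtlety in part (1) is the uniform treatment of singular and regular points, which is achieved here by appealing to the global Lelong volume inequality rather than to explicit Puiseux parameterizations at singularities.
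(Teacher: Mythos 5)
Your overall reduction to \cref{thm:C} is the same as the paper's, and your part (1) is a workable variant of it: where the paper picks $H\simeq\max(\log R^{-1},C_0)$, applies \cref{thm:C}, and uses \cref{lem:complexzerosetextends} (a compact analytic subvariety of $\C^2$ would be finite, so the zero set through $\uv_0$ cannot be swallowed by the Cartan balls) to produce a single point of $\cZ\cap B(\uv_0,R/4)$ where $|f_1|=|F|$ is bounded below, you compare areas instead. That route can be made to work, but it has a quantitative slip: a ball of radius $r=e^{-H_L}$ in $\C^2$ is four real-dimensional, and the piece of an analytic curve inside it can have $\mathcal{H}^2$-measure much larger than $\pi r^2$ --- it is only bounded by (local sheet number)$\,\times\, Cr^2$. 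You need to insert a multiplicity bound (via Jensen and the doubling bound $B_0$ for $f_2$ the sheet number is $\lesssim B_0$); this is harmless after taking logarithms, but it is a missing step, not the stated $\pi K_L e^{-2H_L}$.

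Part (2) is where the genuine gaps lie. First, your atlas consists only of graph-type charts, but near a singular point of $\cZ$ such charts exist only at scales comparable to the distance to $\sng\cZ$, and your lower bound for $\sup_{|z|\le 1/4}|f_1\circ\phi|$ comes from an area comparison in which the available area is only $c\,(\text{chart scale})^2$; the resulting constant then contains $\log(\text{scale}^{-1})$ and blows up as the chart approaches the singular set, destroying exactly the uniformity the theorem asserts. The paper avoids this by parameterizing $\cZ$ near each singular point by fixed-scale Puiseux maps $w\mapsto(z_j+\zeta_{i,j}(w),w_j+w^{p_i})$ with $p_i\le k\lesssim B_0$, proving the bound for $f_1\circ\psi_{i,j}$ on a disk of size depending only on $f_2$, and letting the small charts inherit the bound by restriction. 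Second, your closing step --- ``Jensen converts $M_g(0,1)\le 0$ and the lower bound on $\cD(0,1/4)$ into the doubling estimate on every subdisk of $\cD(0,1)$'' --- is false as stated: for $g(z)=\exp(-\lambda/(1-z))$ one has $M_g(0,1)\le 0$ and $M_g(0,1/4)\ge -C\lambda$, yet $B_g(1/4;z_0,R)\sim \lambda/R$ on disks $\cD(1-2R,R)$ approaching $z=1$. The correct statement requires room: it is \cref{prop:bernstein-analytic} (Harnack for the zero-free factor plus Jensen's zero count), which bounds the doubling only on subdisks of $\cD(0,1/8)$, and this is why the paper restricts each parameterization to the $1/8$-scale subdisk before rescaling it into a chart. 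Your proposal needs both repairs --- Puiseux charts at singular points and the restricted-chart mechanism --- before it yields the uniform bound $C(f_2)C_0B_0^2H_0$.
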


\begin{remark}
  The $ \log R^{-1} $ factor from part (1) of \cref{thm:A} is needed because the estimate covers singular points.
  See \cref{ex:logR}.
\end{remark}

In \cref{thm:B} we obtain a sharper version of part (2) of the previous theorem for the polynomial case.
Such a result is also known from \cite{RoyYom97}. The work of Roytwarf and Yomdin relies on a
classical inequality for the Taylor coefficients of  $p$-valent functions due to Biernacki~\cite{Bie36}.
In turn \cite{Bie36} relies on a deeper growth bound for $p$-valent functions obtained by Cartwright~\cite{Ca35}
(see \cite{Hay94} for a more detailed account of these issues).
In \cref{thm:B} we show that in the context of algebraic curves the Bernstein estimates by
Roytwarf and Yomdin follow from more elementary arguments in the spirit of the argument principle,
without any reference to properties of $p$-valent functions. 
We also require basic properties of the harmonic conjugate and of course  
Bezout's theorem (which is also needed in Roytwarf and Yomdin in order to estimate the valency).
It seems that this approach can be developed for a general algebraic variety.

\begin{thmb}\label{thm:B} Assume that $f_1,f_2$ are polynomials. Let $ \cZ $ be as above.
  Then there exists an atlas of $  \reg \cZ $ with charts defined on $ \cD(0,1) $ such that for any chart
  $ \phi $  and any $ \cD(z_0,R)\subset \cD(0,1) $,  we have
  \begin{equation*}
    B_{f_1\circ \phi}(1/4;z_0,R)\le C(f_2)\times \deg f_1.
  \end{equation*}
\end{thmb}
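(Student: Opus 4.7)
My plan is to reduce the doubling estimate for $g := f_1\circ\phi$ to the classical Bernstein inequality for polynomials, exploiting the algebraic structure of $\cZ$. Write $d_1 = \deg f_1$ and $d_2 = \deg f_2$.

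\emph{Atlas.} Near each regular point of $\cZ$, after permuting coordinates if necessary, $\partial_y f_2\neq 0$, so the implicit function theorem writes $\cZ$ locally as a graph $y=\psi(x)$. I take charts $\phi(w):=(v_0+rw,\,\psi(v_0+rw))$ on $\cD(0,1)$ with $v_0\in\C$ and $r>0$ chosen so that all $d_2$ branches $\psi_1=\psi,\psi_2,\ldots,\psi_{d_2}$ of $f_2(v_0+rw,\cdot)=0$ are analytic and uniformly bounded on $\cD(0,1)$. A compactness argument on $\reg\cZ$ away from the discriminant locus of $f_2$ yields such an atlas with $r$ depending only on $f_2$; one may include all branches separately as charts.

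\emph{Algebraic relation.} Setting $g_j(w):=f_1(v_0+rw,\,\psi_j(v_0+rw))$ with $g=g_1$, the elementary symmetric functions of the $g_j$ are polynomials in $w$:
\begin{equation*}
  P(w,t)\;:=\;\prod_{j=1}^{d_2}\bigl(t-g_j(w)\bigr)\;=\;t^{d_2}+\sum_{k=0}^{d_2-1}a_k(w)\,t^{k},
\end{equation*}
since $P$ coincides up to a nonzero multiplicative constant with the resultant $\mathrm{Res}_y\bigl(f_2(v_0+rw,y),\,t-f_1(v_0+rw,y)\bigr)$. Bezout-type bounds on the resultant give $\deg a_k\le C(f_2)\,d_1$, and classical polynomial Bernstein yields $B_{a_k}(1/4;z_0,R)\le C(f_2)\,d_1$ for every $\cD(z_0,R)\subset\cD(0,1)$.

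\emph{Transfer to $g$.} The key link is the defining equation $g^{d_2}+\sum_{k}a_k\,g^{k}=0$. Let $M:=M_g(0,1)$ and choose $w_1$ with $|g(w_1)|=e^{M}$. Pigeonhole produces $k_0\in\{0,\ldots,d_2-1\}$ with $|a_{k_0}(w_1)|\ge e^{(d_2-k_0)M}/d_2$, so $M_{a_{k_0}}(0,1)\ge (d_2-k_0)M-\log d_2$. Polynomial Bernstein for $a_{k_0}$ then provides $w_2\in\cD(0,1/4)$ with $|a_{k_0}(w_2)|\ge e^{(d_2-k_0)M-C(f_2)d_1}$, and since $a_{k_0}$ is the $(d_2-k_0)$-th elementary symmetric function of $g_1,\ldots,g_{d_2}$, at least one conjugate branch satisfies $|g_{j^*}(w_2)|\ge e^{M-C(f_2)d_1}$.

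\emph{Main obstacle.} The hard part is identifying $j^*=1$, so that the large value at $w_2$ belongs to the chart's branch $g=g_1$. Roytwarf--Yomdin handle this via Biernacki's inequality for $p$-valent functions, the very tool avoided here. Following the authors' indication, the substitute uses the argument principle on a small circle around $w_2$ together with Bezout's theorem to bound the number of coincidences $g_1(w)=g_{j^*}(w)$ (zeros of $g_1-g_{j^*}$, whose level sets are controlled by polynomial Bezout on $\cZ$), combined with the harmonic conjugate of $\log|g_1-g_{j^*}|$ to convert argument variation into modulus oscillation control. This transfers the lower bound back to $g_1$ at a nearby point still inside $\cD(0,1/4)$, yielding the desired estimate $B_g(1/4;z_0,R)\le C(f_2)\,d_1$.
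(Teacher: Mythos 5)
Your reduction to the symmetric functions $a_k(w)$ of the branches $g_1,\dots,g_{d_2}$ runs into exactly the problem you flag, and the sketch you offer for it does not close the gap. The pigeonhole/Bernstein step only tells you that \emph{some} branch $g_{j^*}$ is large at a point $w_2\in\cD(0,1/4)$; nothing in the symmetric-function data distinguishes the chart's own branch $g_1$ from the others, and it is perfectly possible (this is precisely what the theorem must rule out) that $g_1$ is exponentially small on the whole chart while another branch has size comparable to $e^{M}$. Your proposed fix --- bounding the number of coincidence points $g_1(w)=g_{j^*}(w)$ by B\'ezout and using the harmonic conjugate of $\log|g_1-g_{j^*}|$ --- cannot transfer the lower bound: two branches may never coincide on the disk precisely \emph{because} their moduli are wildly different, so a bound on coincidences carries no information about $|g_1|$. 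As stated, the ``main obstacle'' paragraph is a description of a hoped-for argument, not an argument, and it is the entire content of the theorem (the earlier steps are routine).

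For comparison, the paper's proof never compares branches at all. It works with the single trace $\psi=f_1\circ\phi$ on the chart, factors $\psi=hP$ where $P$ is the monic polynomial of the zeros of $\psi$ (their number is $\le\deg f_{2,k}\deg f_1$ by classical B\'ezout), and reduces everything via \cref{eq:Bern-h-P} to bounding the oscillation $M_h(0,3\epsilon_0/4)-M_h(0,\epsilon_0/4)$ of the non-vanishing factor. Writing $h=e^{u+iv}$, Borel--Carath\'eodory shows that a large oscillation of $u$ forces $|v(\hat w)|$ to be large at some $\hat w$; since the winding contributed by $P$ is at most $n$, the quantity $\Im\bigl(e^{-i\theta}\psi(\xi\hat w)\bigr)$ then vanishes many times for \emph{every} $\theta$, i.e.\ the curve trace hits every line through the origin many times. \cref{lem:bezoutpuiseoux1} (proved with the refined B\'ezout bound \cref{eq:Bezout} on irreducible components together with Shiffman's theorem to produce an analytic arc) says at most one such line exists, which caps $|v|$, hence the oscillation of $\log|h|$, by $C(f_2)\deg f_1$. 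If you want to salvage your outline, you would have to replace the coincidence-counting idea with a mechanism of this kind that controls $g_1$ directly rather than through the unordered collection of branches; alternatively, identifying $j^*$ would essentially require a valency bound for $g_1$, which is the Roytwarf--Yomdin route through $p$-valent functions that you set out to avoid.
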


For our application in \cite{GolSchVod16a} we use the Cartan estimate for maps in
$\mathbb{C}^2$ which is \cref{thm:C} we state below. The proof of \cref{thm:A} relies on
\cref{thm:C}. The Cartan estimate for an analytic
function $f(\vvec)$, $\vvec\in \mathbb{C}^2$ (see \cref{lem:high_cart}), basically says that if
the set $\{|f|<\epsilon_0\}$ is ``not two-dimensional" then
$\{|f|<\epsilon\}$ is ``one-dimensional" for any
$\epsilon\ll \epsilon_0$. We prove an analogue statement for mappings. Let
$F: B(0,1)\subset \mathbb{C}^2\to \mathbb{C}^2$ be analytic.  We show  that if the set $\{|F|<\epsilon_0\}$ is
``zero-dimensional", then $\{|F|<\epsilon\}$ is ``zero-dimensional" for
any $\epsilon\ll \epsilon_0$.  Of course, the quantitative details of
the statement here are as important as the topological ones.

\begin{thmc}\label{thm:C}
  Assume the transversality condition holds. Then for any $H\gg 1$ we have
  \begin{equation*}
    \cN_F(\exp(-HB_0^2H_0))\cap B(0,1/4)\in \car_{2,0}(H, K),\quad K\lesssim K_1 B_0^2H_0^2.
  \end{equation*}
\end{thmc}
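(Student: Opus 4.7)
The plan is to localize via the transversality hypothesis and then, on each small ball, apply the scalar Cartan estimate (\cref{lem:high_cart}) to $f_1$, completing with a one-variable argument-principle count for $f_2$ restricted to the zero curve of $f_1$.

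\textbf{Step 1 (Localization).} Since $HB_0^2H_0\gg H_0$ for $H\gg 1$, we have $\cN_F(e^{-HB_0^2H_0})\subset\cN_F(e^{-H_0})$, so the transversality condition yields
\[
  \cN_F(e^{-HB_0^2H_0})\cap B(0,1/2)\subset \bigcup_{j=1}^{j_0}B(\uv_j,e^{-H_1}),\quad j_0\le K_1.
\]
It therefore suffices to cover $\cN_F(e^{-HB_0^2H_0})\cap B(\uv_j,e^{-H_1})\cap B(0,1/4)$ by $O(B_0^2H_0^2)$ balls of radius $e^{-H}$ for each $j$; taking the union and using the bound on $j_0$ delivers the claimed $K\lesssim K_1B_0^2H_0^2$.

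\textbf{Step 2 (Rescale and apply scalar Cartan to $f_1$).} Fix $j$ and rescale $B(\uv_j,e^{-H_1})$ to a unit ball. The Bernstein exponents of the rescaled $f_1,f_2$ remain controlled, growing at worst by a factor $O(H_1)=O(H_0)$ above the original bound $B_0$, via standard three-circle comparisons starting from $M_{f_i}(0,1)\le 0$ and $B_{f_i}(1/4)\le B_0$. Applying \cref{lem:high_cart} to (the rescaled) $f_1$ with target level $e^{-HB_0^2H_0}$ yields one of two alternatives: $(a)$ $\{|f_1|<e^{-HB_0^2H_0}\}$ inside the rescaled ball is already covered by $O(B_0H_0)$ disks of radius $e^{-H}$ in $\mathbb{C}^2$, in which case $\cN_F\subset\{|f_1|<e^{-HB_0^2H_0}\}$ is covered for free; or $(b)$ it sits inside a very thin tube around the zero set $\cZ_1=\{f_1=0\}$, which after a generic linear change of variables is, by Weierstrass preparation, a branched cover over one complex coordinate axis of degree $d\lesssim B_0H_0$.

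\textbf{Step 3 (Restrict $f_2$ to $\cZ_1$ and count).} In alternative $(b)$, parametrize each of the $d\lesssim B_0H_0$ branches of $\cZ_1$ by an analytic disk. On such a disk the restriction $f_2|_{\text{branch}}$ is a univariate analytic function that cannot vanish identically: otherwise the whole branch would lie in $\cN_F(e^{-H_0})$ and would extend well beyond the ball $B(\uv_j,e^{-H_1})$ isolated by the transversality hypothesis, contradicting the cover. By Jensen's formula the Bernstein exponent of $f_2|_{\text{branch}}$ is $\lesssim B_0H_0$, so $\{|f_2|<e^{-HB_0^2H_0}\}$ intersected with the branch is contained in $\lesssim B_0H_0$ disks of radius $e^{-H}$ around the isolated zeros of $f_2|_{\text{branch}}$. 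Summing over the $d$ branches produces a cover of $\cN_F(e^{-HB_0^2H_0})\cap B(\uv_j,e^{-H_1})$ by $\lesssim B_0^2H_0^2$ Euclidean disks of radius $e^{-H}$, which is precisely the bound we need.

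\textbf{Main obstacle.} The delicate step is Step~2: tracking how Bernstein exponents transform under the rescaling factor $e^{H_1}$, and verifying that the hypothesis $H_0\gg H_1\gg B_0$ makes the exponent arithmetic tight; in particular, showing that the power $B_0^2H_0$ in the target scale $e^{-HB_0^2H_0}$ is precisely what the scalar Cartan estimate needs after rescaling. A secondary subtlety is the nonvanishing of $f_2|_{\text{branch}}$ in Step~3: a branch on which $f_2$ vanished identically would have to lie entirely within the transversality cover at scale $e^{-H_1}$, forcing its diameter below $e^{-H_1}$, which must be justified via the maximum principle or analytic continuation of the branch.
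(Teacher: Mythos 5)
Your Step 2 attributes to \cref{lem:high_cart} a dichotomy that it does not provide, and this is where the argument breaks. The two-variable Cartan estimate only produces an exceptional set in $\car_2(H^{1/2},K)$, i.e.\ a ``one-dimensional'' union of slabs; it never asserts that $\{|f_1|<e^{-HB_0^2H_0}\}$ is either a union of $O(B_0H_0)$ balls of radius $e^{-H}$ or a quantified tube around $\cZ_1=\{f_1=0\}$. Alternative $(a)$ essentially never occurs once $f_1$ vanishes somewhere in the ball (its zero set is a complex curve with no compact components, so it cannot be covered by a few balls of radius $e^{-H}$ when $H$ is large), and the tube of alternative $(b)$, with a width one can actually use, has to be extracted from the Weierstrass factorization $f_1=P_1g_1$ together with the lower bound on $\log|g_1|$ (\cref{lem:weier}, \cref{lem:6.3}), not from Cartan. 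Moreover, performing Weierstrass preparation \emph{after} rescaling $B(\uv_j,e^{-H_1})$ to unit size is quantitatively expensive: as you yourself note, the local normalization degrades by a factor $\sim H_1$, so the Weierstrass polydisk one obtains has thickness only about $e^{-CB_0H_1}$ relative to the rescaled ball; exhausting a single transversality ball then requires on the order of $e^{CB_0H_1}$ such polydisks, and nothing in your proposal keeps the final count at $K_1B_0^2H_0^2$. The paper avoids both problems by preparing $f_1,f_2$ once, at scale $\sim 1$ around each $\uv_j$, where the polydisk thickness is $e^{-CB_0}\gg e^{-H_1}$ and the degrees are $\lesssim B_0$, so a single polydisk contains the entire transversality ball.

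In Step 3 the key estimate is asserted only qualitatively where it must be quantitative. To get a zero count for $f_2$ on a branch from Jensen---and, what you actually need, a lower bound of the form $e^{-CHB_0H_0}$ for $|f_2|$ on the branch off a few disks of radius $e^{-H}$, which is a one-variable Cartan statement, not merely a zero count---you need a point of the branch, at the working scale, where $\log|f_2|\ge -CH_0$; the observation that ``$f_2$ does not vanish identically on the branch'' gives no such bound. The correct mechanism is that the branch has no compact components (cf.\ \cref{lem:complexzerosetextends}), hence must leave the union of the $K_1$ transversality balls---here $\log K_1\ll H_1$ is needed so that the chained union has diameter much smaller than the scale over which the branch is parametrized---and at any exit point $|f_2|=|F|\ge e^{-H_0}/\sqrt{2}$ because $f_1=0$ on the branch. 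Even granting this, two steps remain unaddressed: the bound obtained \emph{on} the curve must be transferred to the thin tube around it (the set you actually have to cover), which requires comparing the tube width with a Cauchy bound on the gradient of $f_2$; and the branches are only locally single-valued, so the per-branch estimates must be glued into a global statement. The paper's resultant $R(w)=\Res\bigl(P_1(\cdot,w),P_2(\cdot,w)\bigr)=\prod_i P_2(\zeta_i(w),w)$ is precisely the single-valued analytic function that packages ``$f_2$ restricted to all branches of $\{f_1=0\}$'', and the one-variable Cartan estimate applied to $R$, together with \cref{lem:resbasicprop1} and the symmetric construction in the other variable, replaces your per-branch counting; as written, your Step 3 cannot be completed without reconstructing essentially that device.
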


The proof of Theorem~\ref{thm:C} proceeds in four steps: (a) apply the Weierstrass preparation theorem to the given analytic functions in one of the two coordinates (b) determine the resultant of the two polynomials obtained in the previous step (c) apply Cartan's theorem in one variable so as to guarantee that this resultant is not too small off of a union of small disks in $\C$, which in turn gives that at least one of the two analytic functions is not too small outside of thin cylinders in $\C^2$ (d) repeat the previous steps with respect to  the other variable. The intersection of the two families of thin cylinders gives a $\car_{2,0}$ set. 

It would be interesting to extend this method   to higher dimensions, i.e., to the construction of $\car_{d,0}(H, K)$ sets with $d\ge3$ -- at least for polynomials in $d$ variables. In principle, this appears possible but it seems to require the use of multivariate resultants, which are more delicate than the univariate ones. If Theorem~\ref{thm:C} extends to $d\ge 3$, then one would obtain a Bernstein estimate as in Theorem~\ref{thm:A}.
As our applications do not require this extension, we do not pursue these matters here.

We conclude this introduction by providing some details of the aforementioned spectral theory applications.
%The main point here is
%that the trigonometric polynomials below have the the main quantity in
%question $H_0$ much smaller than the degree of the polynomial.
Consider a trigonometric polynomial of two variables
\begin{equation}\label{Vnondegen53}
  \begin{split}
    V(z,w)=\sum_{|m|,|n|\le k} c_{m,n}e(mz+nw),
  \end{split}
\end{equation}
$e(\zeta):=e^{2\pi i \zeta}$.  To normalize the setting we consider
the unit sphere in the space of the coefficients
\[
  \mathcal{C}_1=\{(c_{m,n})\in\mathbb{R}^{4k+2}:\sum_{m,n}|c_{m,n}|^2=1\}.
\]
We use $\mes$ for the Lebesgue measure on the sphere.  Take arbitrary
$\omega \in \tor^2$, $\lambda\in \mathbb{R}$. Consider the determinant
\begin{equation}\label{dirichlet1}
  \begin{split}
    \begin{aligned}
      &f_N(\vvec)=
      \begin{vmatrix}
        \lambda V(\vvec) & -1 & 0  &\cdots &\cdots & 0\\[5pt]
        -1 & \lambda V(\vvec+\omega)& -1 & 0 & \cdots & 0\\[5pt]
        \vdots & \vdots & \vdots & \vdots & \vdots \\
        &&&&&-1 \\[5pt]
        0 & \dotfill & 0 & -1 && \lambda V(\vvec+(N-1)\omega)
      \end{vmatrix}
    \end{aligned}
  \end{split}
\end{equation}
For $\vvec\in \mathbb{R}^2$,  $f_N(\vvec)$ is the characteristic
determinant of the Schr\"{o}dinger operator with potential
$V(\vvec+n\omega)$, $n\in \mathbb{Z}$ on the interval $[0,N-1]$ subject
to Dirichlet boundary conditions. In \cite{GolSchVod16a} we establish the
following results: {\em Given arbitrary $\epsilon>0$, there exists a set
$\mathcal{C}\subset \mathbb{R}^{4k+2}$ with
$\mes (\mathcal{C}_1\setminus \mathcal{C})<\epsilon$ and
$\lambda_0=\lambda_0(\epsilon)$ depending only on $\epsilon$ such that
for any $V$ with $(c_{m,n})\in \mathcal{C}_1$ and any
$|\lambda|\ge \lambda_0$ there exists
a set $\Omega(V)\subset \tor^2$ with $\mes (\Omega(V))<\epsilon$ such
that for any $\omega\in \tor^2\setminus \Omega(V)$, any $N$, and any
$\vvec_0\in \tor^2$ the functions $f_N(\vvec_0+r_0\vvec)$ and
$f_N(\vvec_n+r_0\vvec)$, $\vvec_n=\vvec_0+n\omega$, $|n|>N$,
$\vvec\in B(0,1)$, $r_0=\exp (-(\log N)^A)$, $A\gg 1$ being an
absolute constant, obey all conditions of \cref{thm:A,thm:C} with
$B_0,H_0\le (\log N)^c$, $c\ll 1$.}

The exceptional sets in this result are not artificial.  In fact,
 the theorem fails for some
$(c_{m,n})\in\mathcal{C}$.  A similar fact is true for the exceptional
frequencies.

\section{Cartan's Estimate}

Recall the following definition from \cite{GolSch08}.

\begin{defi}\label{defi:cartansets}
  Let $H\ge 0$, $ K\ge 1 $.  For an arbitrary set $\cB \subset \IC$
  we say that $\cB \in \car_1(H, K)$ if $\cB\subset
  \bigcup\limits^{j_0}_{j=1} \cD(z_j, r_j)$ with $j_0 \le K$, and
  $\sum_j\, r_j < e^{-H}$.

  If $d\ge 1$ is an integer  and $\cB\subset \IC^d$, then we define
  inductively that $\cB\in \car_d(H, K)$ if for any $1 \le j \le d$ there
  exists $\cB_j \subset \IC, \cB_j \in \car_1(H,  K)$, so that $\cB_z^{(j)} \in \car_{d-1}(H, K)$ for any $z \in \IC
  \setminus \cB_j$,  here $\cB_z^{(j)} = \left\{(z_1, \dots, z_d) \in \cB:
    z_j = z\right\}$.
\end{defi}

The above definition of Cartan sets is motivated by the following statement, known as Cartan estimate on
the lower bound of an analytic function of several variables.

\begin{lemma}[{\cite[Lem.~2.15]{GolSch08}}]\label{lem:high_cart}
 Let $\varphi(z_1, \dots, z_d)$ be an analytic function defined
in a polydisk $\cP = \prod\limits^d_{j=1} \cD(z_{j,0}, 1)$, $z_{j,0} \in
\IC$.  Let $M \ge \sup\limits_{\uz\in\cP} \log |\varphi(\uz)|$,  $m \le \log
\bigl |\varphi(\uz_0)\bigr |$, $\uz_0 = (z_{1,0},\dots, z_{d,0})$.  Given $H
\gg 1$ there exists a set $\cB \subset \cP$,  $\cB \in
\car_d\left(H^{1/d}, K\right)$, $K = C_d H(M - m)$,  such that
\beeq
\label{eq:cart_bd}
\log \bigl | \varphi(z)\bigr | > M-C_d H(M-m)
\eneq
for any $z \in \prod^d_{j=1} \cD(z_{j,0},1/6)\setminus \cB$. Furthermore, when $ d=1 $ we can take
$ K=C(M-m) $ and keep only the disks of $ \cB $ containing a zero of $\phi$ in them.
\end{lemma}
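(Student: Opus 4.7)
The proof is by induction on the dimension $d$.

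For the base case $d=1$, the approach is classical. Jensen's formula applied to $\varphi$ on $\cD(z_0,1)$ gives an upper bound $n \le C(M-m)$ on the number of zeros $\{\zeta_k\}_{k=1}^n$ of $\varphi$ inside $\cD(z_0,3/4)$. Factor $\varphi = P\cdot g$ with $P(z) = \prod_k (z-\zeta_k)$ monic of degree $n$ and $g$ nonvanishing on $\cD(z_0,3/4)$. One invokes Cartan's classical lemma for polynomials to enclose the sublevel set $\{|P|<\eta\}$ in at most $n$ disks whose radii sum to $\lesssim e^{-H}$, for an appropriate choice of $\eta = \eta(H,n)$. A lower bound for $|g|$ on $\cD(z_0,1/2)$ is obtained by noting that $\log|P(z_0)|\le 0$ forces $\log|g(z_0)|\ge m$, after which Harnack's inequality for the harmonic function $\log|g|$, combined with the upper bound $\log|g|\le M+O(n)$, yields $\log|g(z)|\ge -C(M-m)$ on $\cD(z_0,1/2)$. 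Combining gives $\log|\varphi(z)| > M - C(M-m)H$ off a $\car_1(H,C(M-m))$ set. The minimum principle applied to $|P|$ guarantees that every connected component of $\{|P|<\eta\}$ contains a zero of $P$, so only disks containing a zero of $\varphi$ need be retained, giving the \emph{furthermore} clause.

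For the induction step, set $h := H^{1/d}$. Fix a direction $j\in\{1,\dots,d\}$ and apply the $d=1$ case with parameter $h$ to the slice $\psi_j(z_j) := \varphi(z_{1,0},\dots,z_{j-1,0},z_j,z_{j+1,0},\dots,z_{d,0})$. This produces $\cB_j \in \car_1(h,C(M-m))$ such that $\log|\psi_j(z_j)| > M - Ch(M-m)$ whenever $z_j\notin\cB_j$. For each such $z_j$ the residual $(d-1)$-variable function $\varphi(\ldots,z_j,\ldots)$ is bounded above by $M$ and bounded below by $m_j := M - Ch(M-m)$ at the center point obtained by deleting $z_{j,0}$ from $\uz_0$. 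Invoking the inductive hypothesis in dimension $d-1$ with parameter $H' := h^{d-1}$ yields a set $\tilde\cB(z_j)\in \car_{d-1}\bigl((H')^{1/(d-1)},K_{d-1}\bigr) = \car_{d-1}(h,K_{d-1})$ with $K_{d-1}\le C_{d-1} H'(M-m_j)\le C_d H(M-m)$, off of which $\log|\varphi|>M-C_d H(M-m)$.

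The failure set $\cB := \{\uz\in\prod_{j=1}^d \cD(z_{j,0},1/6) : \log|\varphi(\uz)|\le M - C_d H(M-m)\}$ then lies in $\car_d(h,K)$ with $K = C_d H(M-m)$: for each direction $j$ the one-dimensional $\cB_j$ above is the set required by \cref{defi:cartansets}, and the slice $\cB_z^{(j)}$ for $z\notin\cB_j$ embeds in $\tilde\cB(z)$. The principal subtlety is the bookkeeping of exponents: the geometric splitting $h\cdot h^{d-1}=H$ is what preserves the outer radius scale $H^{1/d}$, while the bound $M-m_j = Ch(M-m)$ prevents $K_{d-1}$ from compounding across the induction. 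A naive induction keeping $H$ fixed throughout would degrade the radius exponent, so the factorization $H = h\cdot h^{d-1}$ is essential.
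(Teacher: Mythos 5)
This lemma is quoted from \cite[Lem.~2.15]{GolSch08} and not reproved in the paper, so there is no in-paper proof to compare against; your argument is, however, a correct reconstruction along exactly the lines of the original: a one-dimensional Cartan estimate (Jensen zero count, factorization $\varphi=Pg$, classical Cartan lemma for $P$, Harnack for $\log|g|$) as base case, then induction on dimension by freezing one coordinate, with the geometric splitting $H=H^{1/d}\cdot H^{(d-1)/d}$ keeping every radius at scale $e^{-H^{1/d}}$ while the deficiency compounds multiplicatively to $C_dH(M-m)$. The only loose point is your justification of the \emph{furthermore} clause: simply discarding covering disks that contain no zero could a priori uncover part of $\{|P|<\eta\}$; the clean fix is that the classical Cartan lemma (as in \cite[Lecture 11]{Lev96}) already yields exceptional disks each containing zeros of $P$, so nothing needs to be discarded.
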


\begin{remark}
  (1)
  The choice of the constant $ 1/6 $ in \cite[Lem.~2.15]{GolSch08} was so that one could invoke the
  one-dimensional Cartan estimate as stated in Theorem 4 of \cite[Lecture 11]{Lev96}. However,
  it is straightforward to adjust the result from \cite{Lev96}
  and the proof from \cite{GolSch08} to replace $ 1/6 $ by any $ r<1 $. Of course, the constant $ C_d $ would depend
  (explicitly) on the particular choice of $ r $.

  (2) The definition of Cartan sets gives implicit information about their measure. For example, using Fubini and
  the definition of $ \Car_d $, one gets by induction that the set exceptional set $ \cB $ in the previous lemma
  satisfies $ \mes_{\C^d}(\cB)\le C(d)\exp(-H) $.
\end{remark}

The following notion will be needed for our discussion of Weierstrass' preparation theorem.
\begin{defi}\label{defi:regulardir}
  Let $f$ be analytic on the ball $B(\vvec_0,R_0)\subset \C^2$.
  Let $\mathfrak{e}\in \mathbb{C}^2$ be an arbitrary unit  vector.
  We say that $\mathfrak{e}$ is $m$--regular for $f$ at $\vvec_0$
  (or just $m$--regular if
  it is clear from the context what $ \uv_0 $ is) if
  \[
    \sup_{z\in\cD(0,R_0/4)}\log |f(\vvec_0+z\mathfrak{e})|\ge m.
  \]
\end{defi}

We show that Cartan's estimate implies that most directions are regular.
We use $ \sigma $ to denote the standard spherical measure.

\begin{lemma}
\label{lem:6.3c}
Let $ f $ be as in \cref{defi:regulardir} and let
\begin{equation*}
  M \ge \sup_{B(\uv_0,R_0)} \log |f|,\quad \sup_{B(\uv_0,R_0/4)}\log |f|\ge m.
\end{equation*}
Take arbitrary $H\gg 1$ and set $\underline{m}= M-C_2H(M-m)$, with $ C_2 $ as in \cref{lem:high_cart}.
Denote by $\cB$ the set of $\mathfrak{e}$ which are not $\underline{m}$--regular.
Then
 \[
\sigma (\cB)\lesssim\exp (-H^{1/2}).
 \]
\end{lemma}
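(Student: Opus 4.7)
The plan is to invoke the two-dimensional Cartan estimate \cref{lem:high_cart} so as to confine the sub-level set $\{|f|<e^{\underline m}\}$ to a $\car_2(H^{1/2},K)$ set $\cB^\ast$, and then convert the measure bound on $\cB^\ast$ into a spherical measure bound on the set of non-regular directions by a polar-coordinate Fubini argument.

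First, by compactness, pick $\uv_\ast \in \bar B(\uv_0, R_0/4)$ with $\log|f(\uv_\ast)|\ge m$. Fix $\rho$ close to $3R_0/(4\sqrt{2})$, so that the polydisk $\cP_\ast=\prod_{j=1,2}\cD(v_{\ast,j},\rho)$ is contained in $B(\uv_0,R_0)$. Apply \cref{lem:high_cart} to $f$ on $\cP_\ast$ with parameter $H$, using Remark~(1) to replace the default inner radius $\rho/6$ by $r\rho$ for some $r<1$ close to $1$; the ratio can be chosen so that the inner polydisk of radius $r\rho$ contains $B(\uv_0,R_0/4)$, since $|\uv_0-\uv_\ast|\le R_0/4$ and $B(\uv_\ast,R_0/2)\subset\prod_{j}\cD(v_{\ast,j},R_0/2)$. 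This produces an exceptional set $\cB^\ast\in\car_2(H^{1/2},K)$ with $K=C_2 H(M-m)$ and
\[
  \log|f(\uv)| \;>\; M - C_2 H(M-m) \;=\; \underline m \qquad \text{for every }\uv\in B(\uv_0,R_0/4)\setminus\cB^\ast.
\]

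By definition, a direction $\fe$ lies in $\cB$ iff $\log|f(\uv_0+z\fe)|<\underline m$ for every $z\in\cD(0,R_0/4)$; together with the Cartan bound above this forces $\uv_0+z\fe\in\cB^\ast$ for all such $z$, in particular $\uv_0+r\fe\in\cB^\ast$ for every real $r\in(0,R_0/4]$. For each such $r$ we therefore have the pointwise inequality $\mathbf{1}_{\cB}(\fe)\le\mathbf{1}_{\cB^\ast}(\uv_0+r\fe)$. Integrating over the unit sphere in $\C^2$ against $\sigma$, multiplying by $r^3$, and integrating in $r$ over $(0,R_0/4]$ recognizes the right-hand side as a volume integral in spherical coordinates, giving
\[
  \frac{(R_0/4)^4}{4}\,\sigma(\cB) \;\le\; \mes_{\C^2}\bigl(\cB^\ast\cap B(\uv_0,R_0/4)\bigr).
\]

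Finally, Remark~(2) after \cref{lem:high_cart}, a Fubini-based measure bound for Cartan sets, yields $\mes_{\C^2}(\cB^\ast)\lesssim\exp(-H^{1/2})$ (in fact $\lesssim\exp(-H)$). Combining this with the previous display finishes the proof, the implicit constants being allowed to depend on $R_0$. The main obstacle lies in the Cartan step: the default inner polydisk of radius $\rho/6$ cannot cover $B(\uv_0,R_0/4)$ when $\rho$ is constrained so that $\cP_\ast\subset B(\uv_0,R_0)$, and it is precisely the extension in Remark~(1) to inner fraction $r$ close to $1$ that makes the configuration work; once that geometric setup is in place, the remainder is a routine polar integration on the unit $3$-sphere of $\C^2$.
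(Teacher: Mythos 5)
Your proposal is correct and follows essentially the same route as the paper: apply the two-dimensional Cartan estimate (\cref{lem:high_cart}) to trap the sublevel set $\{\log|f|\le \underline m\}\cap B(\uv_0,R_0/4)$ in a small-measure exceptional set, then integrate in spherical coordinates, noting that for a non-regular direction the entire radial segment lies in that set. The only difference is that you spell out the geometric setup (centering the polydisk at a near-maximum point and invoking Remark (1) to enlarge the inner fraction beyond $1/6$) which the paper leaves implicit; also note that your $R_0$-powers cancel, so the final constant is in fact absolute.
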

\begin{proof} Apply the Cartan estimate
  to find a set $ \hat \cB $, $\mes (\hat\cB) \lesssim R_0^4\exp (-H^{1/2})$, such that
  $ \log \bigl | f(\vvec)\bigr | >\underline{m} $
for any $\vvec \in B(\vvec_0,R_0/4)\setminus \hat\cB$.
Using spherical coordinates  write
\[
\mes (\hat \cB)\ge\int_{\cB}d\sigma(\mathfrak{e})\int_0^{R_0/4}r^3dr\gtrsim R_0^4\sigma (\cB)
\]
and the statement follows.
\end{proof}

\section{Bernstein Exponent and Number of Zeros}

In this section we provide a relation between Bernstein exponents for one variable analytic functions and the number
of their zeros.
\begin{lemma}\label{lem:Harnack-bounds}
  Let $ \phi $ be a non-vanishing analytic function on $ \cD(z_0,R) $
  Then for  any $ z $, $ |z-z_0|=r<R $, we have
  \begin{equation*}
    -\frac{2r}{R-r}(M-\log|\phi(z_0)|)\le \log|\phi(z)|-\log|\phi(z_0)|
    \le \frac{2r}{R+r}(M-\log|\phi(z_0)|),
  \end{equation*}
  where $ M=M_\phi(z_0,R) $.
\end{lemma}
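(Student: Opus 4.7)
Since $\phi$ is non-vanishing on $\cD(z_0,R)$, the function $\log|\phi|$ is harmonic there (a branch of $\log \phi$ exists on the simply connected disk, and $\log|\phi|=\Re\log\phi$). The plan is to apply the classical Harnack inequality to the non-negative harmonic function
\begin{equation*}
  u(z) := M - \log|\phi(z)|, \qquad z\in \cD(z_0,R),
\end{equation*}
which satisfies $u\ge 0$ by the definition of $M=M_\phi(z_0,R)$.

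For a non-negative harmonic function on $\cD(z_0,R)$, Harnack's inequality gives
\begin{equation*}
  \frac{R-r}{R+r}\,u(z_0) \le u(z) \le \frac{R+r}{R-r}\,u(z_0)
  \qquad \text{for } |z-z_0|=r<R.
\end{equation*}
Rewriting this as bounds on $u(z_0)-u(z) = \log|\phi(z)|-\log|\phi(z_0)|$ and computing
\begin{equation*}
  1-\frac{R+r}{R-r}=-\frac{2r}{R-r},\qquad 1-\frac{R-r}{R+r}=\frac{2r}{R+r},
\end{equation*}
yields exactly the two desired inequalities, since $u(z_0)=M-\log|\phi(z_0)|$.

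There is essentially no obstacle: the only thing to verify is that $\log|\phi|$ is harmonic, which follows from non-vanishing of $\phi$ on the simply connected disk. Harnack's inequality itself is a standard consequence of the Poisson integral representation of positive harmonic functions, so nothing beyond classical potential theory is needed.
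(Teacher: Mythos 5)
Your proof is correct and is exactly the paper's argument: the paper likewise applies Harnack's inequality to the non-negative harmonic function $u(z)=M-\log|\phi(z)|$ and rearranges. Nothing further is needed.
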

\begin{proof}
  The estimates follows immediately from Harnack's inequality applied to $ u(z)=M-\log|\phi(z)| $.
\end{proof}

\begin{prop}\label{prop:bernstein-analytic}
  Let $ \phi $ be an analytic function on $ \cD(0,1) $ such that
  \begin{equation*}
    M_\phi(0,1)\le 0,\quad M_\phi(0,1/4)\ge m.
  \end{equation*}
  Let $ n $ be the total number of zeros of $ \phi $ in $ \cD(0,3/4) $.
  Then  for any $ |z_0|<1/8 $, $ r<1/8 $, $ \mu\in (0,1) $, we have
  \begin{equation}\label{eq:B_phi}
    B_{\phi}(\mu;z_0,r)\le Cr(n-m)-n\log\mu\lesssim -(r-\log \mu) m,
  \end{equation}
\end{prop}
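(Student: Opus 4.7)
The plan is to factor $\phi=B\cdot\tilde\psi$ on $\cD(0,3/4)$, where
\[
B(z)=\prod_{j=1}^{n}\frac{(3/4)(z-z_j)}{(3/4)^2-\bar z_j z}
\]
is the Blaschke product for the disk $\cD(0,3/4)$ with zeros at $z_1,\dots,z_n$ (the zeros of $\phi$ in $\cD(0,3/4)$, with multiplicity), and $\tilde\psi=\phi/B$. By construction $|B|\le 1$ on $\cD(0,3/4)$ and $|B|=1$ on $|z|=3/4$, so the maximum principle combined with $|\phi|\le 1$ (from $M_\phi(0,1)\le 0$) forces $|\tilde\psi|\le 1$, with $\tilde\psi$ analytic and nonvanishing on $\cD(0,3/4)$. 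The Bernstein exponent then splits along $\log|\phi|=\log|B|+\log|\tilde\psi|$.

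For the Blaschke part I would write $B=P/Q$ with $P(z)=(3/4)^n\prod(z-z_j)$ and $Q(z)=\prod\bigl((3/4)^2-\bar z_j z\bigr)$, both polynomials of degree $n$. The classical polynomial doubling inequality (via Phragm\'en--Lindel\"of applied to $p(z)/z^n$ on the exterior of a disk) yields $M_P(z_0,r)-M_P(z_0,\mu r)\le -n\log\mu$. For $Q$, the elementary bounds $3/32\le|(3/4)^2-\bar z_j z|\le 33/32$ on $\cD(0,5/8)$ give $\log|Q|$ oscillation at most $Cn$; since $Q$ is nonvanishing on $\cD(0,3/4)$, \cref{lem:Harnack-bounds} applied on $\cD(z_0,1/2)\subset\cD(0,5/8)$ (using $|z_0|<1/8$) produces $\bigl|\log|Q(z)|-\log|Q(z_0)|\bigr|\le Crn$ for $|z-z_0|\le r<1/8$. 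Combining, $M_B(z_0,r)-M_B(z_0,\mu r)\le -n\log\mu+Crn$.

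For the nonvanishing part, set $u:=-\log|\tilde\psi|\ge 0$, harmonic on $\cD(0,3/4)$. The hypothesis $M_\phi(0,1/4)\ge m$ produces $z^\ast\in\overline{\cD(0,1/4)}$ with $\log|\phi(z^\ast)|\ge m$, and since $\log|B|\le 0$ we get $u(z^\ast)\le -m$. Harnack's inequality on $\cD(0,3/4)$ has uniform constant over the compact subset $\overline{\cD(0,1/4)}$, hence $u(z_0)\le C(-m)$ for $|z_0|<1/8$. A second use of \cref{lem:Harnack-bounds} on $\cD(z_0,1/2)$---with ``height'' $M_{\tilde\psi}(z_0,1/2)-\log|\tilde\psi(z_0)|\le u(z_0)$ since $|\tilde\psi|\le 1$---gives $\bigl|\log|\tilde\psi(z)|-\log|\tilde\psi(z_0)|\bigr|\le Cr(-m)$ for $|z-z_0|\le r$. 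Assembling via $M_\phi(z_0,r)\le M_B(z_0,r)+M_{\tilde\psi}(z_0,r)$ and $M_\phi(z_0,\mu r)\ge M_B(z_0,\mu r)+\min_{\cD(z_0,\mu r)}\log|\tilde\psi|$ yields the first bound $B_\phi(\mu;z_0,r)\le Cr(n-m)-n\log\mu$.

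For the second inequality $Cr(n-m)-n\log\mu\lesssim -(r-\log\mu)m$ it suffices to show $n\lesssim -m$, which I would derive from the Poisson--Jensen identity for $\phi$ on $\cD(0,1)$ at $z^\ast$: using the classical identity $|1-\bar z_j z^\ast|^2-|z^\ast-z_j|^2=(1-|z_j|^2)(1-|z^\ast|^2)$, each Blaschke factor satisfies $\log\bigl|(1-\bar z_j z^\ast)/(z^\ast-z_j)\bigr|\ge c>0$ uniformly for $|z_j|\le 3/4$, $|z^\ast|\le 1/4$, so summation forces $cn\le -m$. Substituting $n\lesssim -m$ into $Cr(n-m)-n\log\mu$ and collecting terms produces $\lesssim-(r-\log\mu)m$. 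The main obstacle is not any single analytic ingredient---each invocation is routine---but in coordinating the nested radii $3/4\supset 5/8\supset 1/2\supset 1/4\supset 1/8$ so that the three Harnack-type estimates (for $\log|Q|$, for $u$ via the chain from $z^\ast$ to $z_0$, and for $\tilde\psi$ on $\cD(z_0,1/2)$) all enjoy uniform constants and the Blaschke factorization lives on the correct disk.
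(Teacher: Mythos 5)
Your argument is correct, and it follows the same overall skeleton as the paper's proof (split off the zeros, apply the classical degree-$n$ doubling bound to the zero part, control the nonvanishing part through \cref{lem:Harnack-bounds}, and use a Jensen-type argument to get $n\lesssim -m$), but the factorization you choose is genuinely different. The paper writes $\phi=Ph$ with $P(z)=\prod_k(z-a_k)$ monic; the price of that choice is that $M_h(0,3/4)$ is not bounded a priori, so the paper must prove $M_h(0,3/4)\le n\log 4$ by a submean-value computation for $\log|g|=\log|\phi|-\log|P|$ and then use $M_P(0,1/4)\le 0$ to get $M_h(0,1/4)\ge m$, arriving at the Harnack ``height'' $M_h(0,3/4)-M_h(0,1/4)\le C(n-m)$. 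Your Blaschke factorization $\phi=B\tilde\psi$ on $\cD(0,3/4)$ buys the bound $|\tilde\psi|\le 1$ for free from the maximum principle, so the height of the nonvanishing factor is controlled purely by $-m$ (via $u(z^\ast)\le -m$ and a Harnack chain to $z_0$), while the $n$-dependence is isolated in the Blaschke part; the price is the extra bookkeeping for the denominator $Q$, which you handle correctly with the elementary bounds $3/32\le|(3/4)^2-\bar z_jz|\le 33/32$ on $\cD(0,5/8)$ and \cref{lem:Harnack-bounds}. Your route to $n\lesssim -m$ (Poisson--Jensen at $z^\ast$ with the uniform lower bound $\log|(1-\bar z_jz^\ast)/(z^\ast-z_j)|\ge c$ coming from the identity $|1-\bar z_jz^\ast|^2-|z^\ast-z_j|^2=(1-|z_j|^2)(1-|z^\ast|^2)$) also differs from the paper's Jensen-after-M\"obius-map argument but is equally valid. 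Two routine points you should tidy up: $\phi$ is only assumed analytic on the open disk $\cD(0,1)$, so Poisson--Jensen should be run on a circle of radius strictly less than $1$ (say $7/8$; the factor lower bound survives with a slightly different constant, and the zeros in the annulus contribute terms of the right sign), and the point $z^\ast$ should be taken as a near-maximizer since the supremum defining $M_\phi(0,1/4)$ need not be attained --- neither affects the estimates.
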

\begin{proof}
  Take $\zeta_0\in \cD(0,1/4)$ wit $\log |f(\zeta_0)|= m$. Using Jensen's formula applied to
  \begin{equation*}
    f \left( \frac{z+\zeta_0}{1+\overline {\zeta_0}z} \right)
  \end{equation*}
  we get $ n\lesssim -m $. So, we just have to prove the first estimate in \cref{eq:B_phi}.

  Let $ a_1,\ldots,a_n $, be the zeros of $ \phi $ in $ \cD(0,7/8) $,
  repeated according to their multiplicities.
  Let $ P(z)=\prod_{k=1}^n(z-a_k) $, $ h=\phi/P $, and
  $ z_1 $, $ |z_1-z_0|=\mu r $, be such that $ \log|h(z_1)|=M_h(z_0,\mu r) $.
  Note that  $ h $ is non-vanishing and analytic on $ \cD(0,3/4) $.
  Using \cref{lem:Harnack-bounds}
  we have that for any $ z\in \cD(z_0,\mu r) $
  \begin{multline*}
    \log|h(z)|\ge \log|h(z_1)|-\frac{2|z-z_1|}{1/2-|z-z_1|}(M_h(z_0,1/2)-\log|h(z_1)|)\\
    \ge M_h(z_0,\mu r)-C\mu r(M_h(0,3/4)-M_h(z_0,\mu r)).
  \end{multline*}
  Therefore
  \begin{equation*}
    M_\phi(z_0,\mu r)\ge M_h(z_0,\mu r)-C\mu r(M_h(0,3/4)-M_h(z_0,\mu r))+M_P(z_0,\mu r)
  \end{equation*}
  and
  \begin{multline}\label{eq:B-initial}
    B_\phi(\mu;z_0,r)\le M_h(z_0,r)-M_h(z_0,\mu r)+C\mu r(M_h(0,3/4)-M_h(z_0,\mu r))\\
    +M_P(z_0,r)-M_P(z_0,\mu r).
  \end{multline}
  Let $ z_2 $, $ |z_2-z_0|=r $, such that $ \log|h(z_2)|=M_h(z_0,r) $ and $ z_3 $, $ |z_3|=1/4 $, such that
  $ \log|h(z_3)|=M_h(0,1/4) $. Using \cref{lem:Harnack-bounds} we get
  \begin{multline*}
    M_h(z_0,r)-M_h(z_0,\mu r)=\log|h(z_2)|-\log|h(z_1)|\\
    \le \frac{2|z_2-z_1|}{1/2+|z_2-z_1|}(M_h(z_1,1/2)-\log|h(z_1)|)
    \le Cr(M_h(0,3/4)-M_h(z_0,\mu r)),
  \end{multline*}
  \begin{multline*}
	M_h(z_0,\mu r)-M_h(0,1/4)=\log|h(z_1)|-\log|h(z_3)|\\
    \ge -\frac{2|z_3-z_1|}{1/2-|z_3-z_1|}(M_h(z_3,1/2)-\log|h(z_3)|)
   \ge -C(M_h(0,3/4)-M_h(0,1/4)).
  \end{multline*}
  Plugging these estimates in \cref{eq:B-initial} we get
  \begin{equation}\label{eq:Bern-h-P}
    B_\phi(\mu;z_0,r)\le Cr(M_h(0,3/4)-M_h(0,1/4))+B_P(\mu;z_0,r).
  \end{equation}
  Recall that we know $ B_P(\mu;z_0,r)\le -n\log\mu $, so to get the conclusion we just have to estimate
  $ M_h(0,3/4)-M_h(0,1/4) $. Given $ z\in \cD(0,3/4) $,  apply
  the submean value property to get
  \begin{equation*}
    \log|h(z)|\le\frac{1}{2\pi}\int_0^{2\pi} \log|\phi(z+e^{i\theta}/4)|\,d\theta-
    \frac{1}{2\pi}\int_0^{2\pi} \log|P(z+e^{i\theta}/4)|\,d\theta\le n\log 4
  \end{equation*}
  and conclude $ M_h(0,3/4)\le n\log 4 $. We used the assumption that $ M_{\phi}(0,1)\le 0 $ and
  the fact that
  \begin{equation}\label{eq:log-int}
    \frac{1}{2\pi}\int_0^{2\pi} \log|z-a_k+e^{i\theta}/4|\,d\theta
    = \begin{cases}
      \log \frac{1}{4} &, |z-a_k|\le \frac{1}{4}\\
      \log|z-a_k| &, |z-a_k|>\frac{1}{4}
    \end{cases}
    \ge \log \frac{1}{4}.
  \end{equation}
  Since clearly $ M_P(0,1/4)\le 0 $, we  have $ M_h(0,1/4)\ge M_\phi(1/4) $. So,
  \begin{equation*}
    M_h(0,3/4)-M_h(0,1/4)\le C(n-m)
  \end{equation*}
  and the conclusion follows.
\end{proof}

\begin{remark}
  (1) It is not true that conclusion of Proposition~\ref{prop:bernstein-analytic}
  can be made just in terms of the number $n$ of zeros of $\phi$. Some estimate for
  $ M_\phi(0,1/4)$ is really needed. Here is an elementary example $\phi(z)=\exp(-N+Nz)$,
  $z\in\cD(0,1)$ and $N>0$ is arbitrary. Clearly,  $M_\phi(0,1)=0$, $n=0$. On the other hand
  $ M_\phi(0,1/4)\simeq -N$, $B_\phi(1/4;0,1/8)\simeq N$.

  (2) It is known from \cite{RoyYom97} that if we have control on the valency of the function $ \phi $,
  instead of just the number of zeros, then the estimate for $ M_\phi(0,1/4) $ is not needed anymore.
\end{remark}

\section{Weierstrass' Preparation Theorem and Bernstein Exponents}

We start with a statement of the classical Weierstrass' preparation theorem attuned to our purposes.
\begin{lemma}\label{lem:weier}
  Let $f(z,w)$ be analytic function
  on a polydisk
  \begin{equation*}
    \cP:=\cD(z_0,R_0)\times \cD(w_{0},R_0)\subset \mathbb{C}^2,\quad \ R_0>0.
  \end{equation*}
  Assume that $f(\cdot, w)$ has no zeros on some circle $\Gamma_{\rho_0}=\{ z:|z-z_0|=\rho_0 \}$,
  $ 0<\rho_0<R_0/2 $,
  for any $w \in  \cD(w_0, r_1)$, $0<r_1<R_0$.  Then there exists a Weierstrass polynomial $P(z,w)
  = z^k +a_{k-1} (w) z^{k-1} + \cdots + a_0 (w)$ with $a_j(w)$ analytic in
  $\cD(w_{0}, r_1)$ and an analytic function $g(z,w), (z,w) \in \cP':=\cD(z_0, \rho_0) \times \cD(w_{0}, r_1)$
  so that the following properties hold:
  \begin{enumerate}
  \item[(a)] $f(z,w) = P(z,w) g(z,w)$ for any $(z,w) \in \cP'$.

  \item[(b)] $g(z,w) \ne 0$ for any $(z,w) \in \cP'$.

  \item[(c)] For any $w \in \cD(w_0,r_1)$, $P(\cdot,w)$ has no zeros in $\C \setminus \cD(z_0,\rho_0)$.

  \item[(d)] We have
    \begin{gather}
      \label{eq:g-lb}\left( \inf_{\Gamma_{\rho_0}\times \cD(w_0,r_1)}\log|f| \right)-k\log(2\rho_0)
      \le \inf_{\cP'}\log|g|,\\
      \label{eq:g-ub}\sup_{\cP'}\log|g|\le\left(\sup_{\cP} \log |f|\right)+k\log \frac{2}{R_0}.
    \end{gather}
  \end{enumerate}
\end{lemma}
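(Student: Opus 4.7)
The plan is to carry out the classical Weierstrass preparation construction while tracking the sizes of all intermediate objects carefully so that the quantitative bounds in~(d) drop out at the end. Since $f(\cdot,w)$ has no zeros on $\Gamma_{\rho_0}$ for $w\in\cD(w_0,r_1)$, the argument-principle integral
\[
  k(w):=\frac{1}{2\pi i}\oint_{\Gamma_{\rho_0}}\frac{\partial_z f(z,w)}{f(z,w)}\,dz
\]
is well defined and equals the number of zeros of $f(\cdot,w)$ in $\cD(z_0,\rho_0)$ counted with multiplicity. As a continuous $\Z$-valued function on a connected set, $k(w)\equiv k$ is constant. Labeling the zeros as $\zeta_1(w),\ldots,\zeta_k(w)$, the Cauchy integrals
\[
  p_m(w)=\frac{1}{2\pi i}\oint_{\Gamma_{\rho_0}}z^m\frac{\partial_z f(z,w)}{f(z,w)}\,dz
\]
give the power sums $\sum_j\zeta_j(w)^m$ and are analytic in $w\in\cD(w_0,r_1)$. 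Newton's identities then produce the elementary symmetric functions $a_j(w)$ as analytic functions of $w$, and I set
\[
  P(z,w):=\prod_{j=1}^{k}(z-\zeta_j(w))=z^k+a_{k-1}(w)z^{k-1}+\cdots+a_0(w),
\]
the desired Weierstrass polynomial; property~(c) is built into the construction.

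Next I introduce $g:=f/P$. For each fixed $w$, the zeros of $P(\cdot,w)$ in $\cD(z_0,\rho_0)$ match those of $f(\cdot,w)$ with multiplicity, so $g(\cdot,w)$ has removable singularities at the $\zeta_j(w)$ and extends to an analytic, non-vanishing function on $\cD(z_0,\rho_0)$. Joint analyticity on $\cP'$ can be read off from the Cauchy representation
\[
  g(z,w)=\frac{1}{2\pi i}\oint_{\Gamma_{\rho_0}}\frac{f(\zeta,w)/P(\zeta,w)}{\zeta-z}\,d\zeta,\qquad (z,w)\in\cP',
\]
which is legitimate because $P(\cdot,w)$ is non-vanishing on $\Gamma_{\rho_0}$. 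This proves~(a) and~(b). Moreover, since $P$ is zero-free on $\C\setminus\cD(z_0,\rho_0)$ as a polynomial in $z$, $g$ in fact extends analytically to all of $\cD(z_0,R_0)\times\cD(w_0,r_1)$, a fact convenient for the upper bound below.

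The estimates in~(d) then reduce to two pointwise inequalities for $P$. On $\Gamma_{\rho_0}\times\cD(w_0,r_1)$ one has $|z-\zeta_j(w)|\le 2\rho_0$ since both $z$ and $\zeta_j(w)$ lie in $\overline{\cD(z_0,\rho_0)}$, giving $|P(z,w)|\le(2\rho_0)^k$ and hence $\log|g|\ge\log|f|-k\log(2\rho_0)$ there. Since $\log|g(\cdot,w)|$ is harmonic on $\cD(z_0,\rho_0)$, the minimum principle on the closed disk propagates this lower bound to all of $\cP'$, yielding \cref{eq:g-lb}. For the upper bound I pick any $\rho_0<R_0'<R_0$; on $\{|z-z_0|=R_0'\}$ one has $|z-\zeta_j(w)|\ge R_0'-\rho_0$, so $|P(z,w)|\ge(R_0'-\rho_0)^k$ and consequently $|g(z,w)|\le\exp(\sup_{\cP}\log|f|)/(R_0'-\rho_0)^k$ there. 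Applying the maximum principle to the analytic function $g(\cdot,w)$ on $\overline{\cD(z_0,R_0')}$ (which is possible thanks to the extension just noted) carries this bound to $\cD(z_0,\rho_0)$, and letting $R_0'\uparrow R_0$ together with the hypothesis $\rho_0<R_0/2$ produces \cref{eq:g-ub}. No individual step is genuinely hard; the only thing requiring attention is that the contour $\Gamma_{\rho_0}$, the enlarged radius $R_0'$, and the domain of $g$ be coordinated so that each extremum principle is invoked on a set where $g$ is known to be analytic and (for the lower bound) non-vanishing.
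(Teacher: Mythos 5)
Your construction of $P$ (argument-principle count of zeros, analyticity of the power sums $p_m(w)$, Newton's identities for the $a_j$) and your proof of \cref{eq:g-lb} (the bound $|P|\le(2\rho_0)^k$ from the fact that all roots lie in $\cD(z_0,\rho_0)$, followed by the minimum principle for the non-vanishing $g(\cdot,w)$) coincide with the paper's argument, and both are correct. Where you genuinely diverge is \cref{eq:g-ub}: the paper works with the subharmonic function $\log|g|=\log|f|-\log|P|$ and applies the sub-mean value property on circles of radius $R_0/2$ centered at $z$, bounding the circular mean of $\log|P|$ from below by $k\log(R_0/2)$ via the explicit logarithmic mean of $|z-a|$ (the computation in \cref{eq:log-int}); you instead bound $|P|\ge(R_0'-\rho_0)^k$ pointwise on the circle $|z-z_0|=R_0'$, use the observation (also made in the paper) that $g=f/P$ extends analytically to $\cD(z_0,R_0)\times\cD(w_0,r_1)$, and then apply the maximum principle and let $R_0'\uparrow R_0$. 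Both routes are elementary and produce the same constant; yours invokes the hypothesis $\rho_0<R_0/2$ only at the last step (and in fact yields the slightly sharper bound with $k\log\frac{1}{R_0-\rho_0}$ in place of $k\log\frac{2}{R_0}$), while the paper's averaging argument never needs the extension of $g$ beyond what is used to define it on $\cP$ and transfers directly to the setting where only integral (Jensen-type) information about $\log|f|$ is available. Your coordination of the contours ($\Gamma_{\rho_0}$ for the lower bound, $|z-z_0|=R_0'<R_0$ for the upper bound) is handled correctly, so there is no gap.
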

\begin{proof}
  By the usual Weierstrass argument, one notes that
  $$
  b_p(w) := \sum^k_{j=1} \zeta^p_j (w) = \frac{1}{2\pi i} \oint\limits_{\Gamma} z^p \
  \frac{\partial _z f(z, w)}{f(z, w)}\, dz
  $$
  are analytic in $\cD(w_0,r_1)$.  Here $\zeta_j (w)$ are the zeros of $f(\cdot, w)$
  in $\cD(z_0,\rho_0)$.  Since the coefficients $a_j(w)$ are
  linear combinations of the $b_p$,
  they are analytic in $w$.  Analyticity
  of $g$ follows by standard arguments. We just have to prove (d). Since all the roots of $ P(\cdot,w) $ are in
  $ \cD(z_0,\rho_0) $, we have $ \sup_{\cP'}|P|\le (2\rho_0)^k $ and \cref{eq:g-lb} follows using the
  minimum modulus principle. % By the Cauchy estimates
  % \begin{equation*}
  %   |f(z,w)-f(z',w)|\lesssim |z-z'|e^M/R_0
  % \end{equation*}
  % for $ z,z'\in \cD(z_0,R_0/2) $, $ w\in \cD(w_0,R_0) $. It follows that
  % \begin{equation*}
  %   |f(z,w)|\ge e^{\delta}/2.
  % \end{equation*}
  % provided $ \rho_0-\epsilon\le |z-z_0|\le \rho_0+\epsilon $, $ w\in \cD(w_0,r_1) $,
  % $ \epsilon \ll R_0 e^{\delta-M} $. In particular, it follows that $ P(z,w) $ doesn't have zeros in this region
  % and so
  % \begin{equation*}
  %   \log |P(z,w)|\gtrsim k(\log R_0+\delta-M)
  % \end{equation*}
  % provided $ \rho_0-\epsilon\le |z-z_0|\le \rho_0+\epsilon $, $ w\in \cD(w_0,r_1) $. Then \cref{eq:g-ub} follows
  % using the maximum modulus principle.
  Note that actually the function $ g $ can be defined on $ \cP $ as $ g=f/P $ and it is analytic there.
  Given $ (z,w)\in \cP' $,  apply
  the sub-mean value property for subharmonic functions to get
  \begin{multline*}
    \log|g(z,w)|\le \frac{1}{2\pi}\int_0^{2\pi} \log|f(z+R_0e^{i\theta}/2,w)|\,d\theta-
    \frac{1}{2\pi}\int_0^{2\pi} \log|P(z+R_0e^{i\theta}/2,w)|\,d\theta\\
    \le \left(\sup_{\cP} \log |f|\right)+k\log \frac{2}{R_0}.
  \end{multline*}
  The estimate on the mean value of the polynomial follows by considerations analogous to \cref{eq:log-int}.
  % We used
  % the fact that
  % \begin{equation*}
  %   \frac{1}{2\pi}\int_0^{2\pi} \log|z-a+R_0e^{i\theta}/2|\,d\theta
  %   = \begin{cases}
  %     \log \frac{R_0}{2} &, |z-a|\le \frac{R_0}{2}\\
  %     \log|z-a| &, |z-a|>\frac{R_0}{2}
  %   \end{cases}
  %   \ge \log \frac{R_0}{2}.
  % \end{equation*}
\end{proof}

Next we describe how Bernstein exponents rule the application of
 Lemma~\ref{lem:weier}.

\begin{lemma}\label{lem:6.3}
  Let $f$ be analytic on  $B(0,1)$,
  $ M\ge \sup_{B(0,1)}\log|f| $, $ \um= M-B $, $ B\gg 1 $, $ \fe_1 $ a  $ \um $-regular direction for $ f $ at $ 0 $
  (recall   \cref{defi:regulardir}),
  and $ \fe_2 $ another non-collinear direction.
  With a slight abuse of notation we denote by $f(z,w)$ the function in the new coordinates with respect to the basis
  $\mathfrak{e}_1,\mathfrak{e}_2$.
  Then there exists a circle $\Gamma_{\rho_0} = \left\{|z| = \rho_0\right\}$,
  $1/8 < \rho_0 < 1/4$, and $r_1=\exp \left(-CB\right)$, with $ C>1 $ an absolute constant, such that
  \begin{equation}\label{eq:f-blb}
    \inf_{\Gamma_{\rho_0}\times \cD(0,r_1)} \log|f|\ge \exp(M-CB).
  \end{equation}
  In particular, Lemma~\ref{lem:weier} applies for $f(z,w)$ with this choice of $\rho_0$ and~$r_1$,
  as well as with $k\lesssim B$ and $ \delta\ge M-CB $.

\end{lemma}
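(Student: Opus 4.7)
The plan is to invoke the one-dimensional Cartan estimate on the slice $f(\cdot,0)$ to produce the circle $\Gamma_{\rho_0}$, then use a crude Cauchy estimate in the $w$-variable to propagate the lower bound from $w=0$ to a small disk, and finally check that the hypotheses of \cref{lem:weier} are met with the claimed $k$ and $\delta$.

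Concretely, the $\um$-regularity of $\fe_1$ provides some $z_*\in\cD(0,1/4)$ with $\log|f(z_*,0)|\ge\um=M-B$. I would apply the one-dimensional case of \cref{lem:high_cart} to the univariate function $\phi(z):=f(z,0)$, centered at $z_*$ and using the remark that the constant $1/6$ may be replaced by any $r<1$, so that the estimate
\[
\log|\phi(z)|\ge M-CHB
\]
holds on a disk comfortably containing the closed annulus $\{1/8\le|z|\le 1/4\}$, outside an exceptional Cartan set $\cB_0\in\car_1(H,K)$ with $\sum_j r_j< e^{-H}$. The set of radii $\rho\in(1/8,1/4)$ for which $\Gamma_\rho$ meets $\cB_0$ has one-dimensional measure at most $2e^{-H}$; fixing $H$ to be a sufficiently large absolute constant makes this measure $<1/16$, so a good $\rho_0\in(1/8,1/4)$ exists and, on $\Gamma_{\rho_0}$,
\[
\log|f(z,0)|\ge M-CB.
\]

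To extend this lower bound to a cylinder, I would use Cauchy's estimate in $w$ to obtain $|\partial_w f(z,w)|\lesssim e^M$ on, say, $\{|z|\le 1/2\}\times\{|w|\le 1/4\}$, and then write
\[
|f(z,w)|\ge|f(z,0)|-|w|\sup|\partial_w f|\ge e^{M-CB}-C e^M|w|.
\]
Choosing $r_1=\exp(-C'B)$ with $C'$ large enough that $C e^M r_1\le\tfrac12 e^{M-CB}$ gives the desired bound $\log|f(z,w)|\ge M-C''B$ for $(z,w)\in\Gamma_{\rho_0}\times\cD(0,r_1)$. This simultaneously verifies the zero-free hypothesis on $\Gamma_{\rho_0}$ required in \cref{lem:weier} and produces the value $\delta\ge M-CB$.

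Finally, the degree $k$ equals the number of zeros of $f(\cdot,w)$ in $\cD(0,\rho_0)$ (independent of $w\in\cD(0,r_1)$ by continuity and the zero-free condition on $\Gamma_{\rho_0}$), which in turn equals the number of zeros of $f(\cdot,0)$ there; Jensen's formula applied to $e^{-M}f(\cdot,0)$ on $\cD(0,1)$, combined with $\log|e^{-M}f(z_*,0)|\ge -B$, yields $k\lesssim B$ as claimed. The only delicate point in the whole scheme is coordinating constants: the Cartan parameter $H$ must be large enough to make the exceptional-radii measure small relative to the interval $(1/8,1/4)$, yet only contributes additively to the exponent of $r_1$, so all constants remain absolute and the estimate $r_1=\exp(-CB)$ with $C\gtrsim 1$ comes out cleanly.
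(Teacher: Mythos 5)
Your proposal is correct and follows essentially the same route as the paper: one-dimensional Cartan on the slice $f(\cdot,0)$ (using the $\um$-regular point as the non-smallness anchor), selection of $\rho_0\in(1/8,1/4)$ avoiding the exceptional disks because their total radius is small, a Cauchy-estimate perturbation $|f(z,w)-f(z,0)|\lesssim e^{M}|w|$ to pass to the cylinder $\Gamma_{\rho_0}\times\cD(0,r_1)$ with $r_1=\exp(-CB)$, and Jensen's formula centered at the regular point to bound the zero count, hence $k\lesssim B$. The only differences are bookkeeping-level (fixing the Cartan parameter $H$ as an absolute constant and phrasing the perturbation via $\partial_w f$), so nothing further is needed.
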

\begin{proof}
  Since $ \fe_1 $ is a $ \um $-regular direction, there exists $ z_1 $, $ |z_1|= 1/4 $,
  such that $ \log|f(z_1,0)|\ge \um $.
  Due to Cartan's estimate one has
  \begin{equation}\label{eq:fz0-Cartan}
      \log|f(z,0)| \ge M - C(M-\um)= M-CB
  \end{equation}
  for any $z \in \cD(0, 1/4) \setminus \cB$, where $\cB \in \car_1\left(C', C'B\right)$, $ C'\gg 1 $.
  As a consequence of the definition of $ \Car_1 $ sets, we can choose $1/8 < \rho_0 < 1/4$ such that
  $\cB \cap \Gamma_{\rho_0} =\emptyset$.
  Then
  \begin{equation}\label{eq:w0}
      |f(z, 0)| \ge \exp \left(M - CB \right)
  \end{equation}
  for any $z \in \Gamma_{\rho_0}$.  Note that due to Cauchy's estimates
  \begin{equation*}
    |f(z,w) - f(z,0)| \lesssim e^M |w|
  \end{equation*}
  for any $z \in \cD(0, 1/2)$, $w \in \cD(0, 1/2)$.  Taking into
  account~\eqref{eq:w0},  one obtains
  \begin{equation*}
      |f(z,w)| >\exp \left(M - CB \right)
  \end{equation*}
  for any $z \in \Gamma_{\rho_0}$, provided $w \in\cD(0, r_1)$, $r_1 =\exp \left(-CB\right)$, with $ C $ large
  enough (of course, $ C $ is larger than in \cref{eq:w0}). This proves \cref{eq:f-blb} and allows us to apply
  \cref{lem:weier} as stated. For the bound on the degree of the Weierstrass polynomial note that
  by Jensen's formula applied to $f(z,0)$, $z\in\cD(z_1,1/2)$,
  \begin{equation*}
    k \le \#\left\{z \in \cD(0,1/4): f(z,0) = 0 \right\}
    \le \#\left\{z \in \cD(z_1, 1/2): f(z,0) = 0 \right\} \lesssim B.
  \end{equation*}
\end{proof}

\begin{remark}\label{rem:Weier-simultaneous}
 $(1)$ Due to \cref{lem:6.3c}, we will always apply the previous lemma with $ B\simeq B_f(1/4;0,1) $. This is how the
  Bernstein exponent determines the size of the polydisk on which we have the Weierstrass factorization.

  $(2)$ If we are given two functions $ f_1,f_2 $ satisfying the assumptions of \cref{lem:6.3} with the same $ M $ and
  $ B $, then it is clear from the proof of the lemma that we can arrange for the conclusion to hold for both
  functions with the same choice of $ \rho_0 $ and $ r_1 $. Indeed, one only needs to choose $ \rho_0 $ such that
  $ \Gamma_{\rho_0}\cap(\cB_1\cup \cB_2)=\emptyset $, where $ \cB_i $ are the Cartan sets needed to guarantee
  \cref{eq:fz0-Cartan} for $ f_i $.
\end{remark}

\section{Resultants}

We briefly recall the definition of the resultant of two univariate polynomials and some of the basic properties
that we'll use.
Let $f(z) = a_nz^n + a_{n-1} z^{n-1} + \cdots + a_0$,
$g(z) = b_mz^m + b_{m-1} z^{m-1} + \cdots + b_0$ be polynomials,
$a_i, b_j \in \IC$, $a_n\neq 0$, $b_m\neq 0$.  Let $\zeta_i$, $1 \le i \le n$ and $\eta_j$,
$1 \le j \le m$ be the zeros of $f(z)$ and $g(z)$ respectively.  The
resultant of $f$ and $g$ is defined as follows:
\begin{equation}\label{eq:resdef}
  \Res(f, g) = a_n^mb_m^n\prod_{i,j} (\zeta_i - \eta_j)=(-1)^{mn}b_m^n\prod_{j} f(\eta_j)
  =(-1)^{mn}a_n^m\prod_{i} g(\zeta_i).
\end{equation}
The resultant $\Res(f, g)$ can be expressed explicitly in terms of the
coefficients (see \cite{Lan02}):
\begin{equation}\label{eq:resdef1}
  \Res(f, g) = \left|\begin{array}{ll}
                       {\overbrace{\begin{array}{lll}
                                     a_n & 0 & \cdots\\
                                     a_{n-1}  & a_n & \cdots\\
                                     a_{n-2} & a_{n-1} & \cdots\\
                                     \cdots & \cdots & \cdots\\
                                     \cdots & \cdots & \cdots\\
                                     \cdots & \cdots & \cdots\\
                                     a_0 & a_1 &  \\
                                     0 & a_0 &  \end{array}}^m} &
                                                                  {\overbrace{\begin{array}{llll}
                                                                                b_m & 0 & \cdots & 0\\
                                                                                b_{m-1} & b_m & \cdots & \cdots\\
                                                                                b_{m-2} & b_{m-1} &\cdots & \cdots\\
                                                                                \cdots & \cdots & \cdots & \cdots\\
                                                                                \cdots & \cdots & \cdots & \cdots\\
                                                                                \cdots & \cdots & \cdots& \cdots\\
                                                                                  &&&\\
                                                                                  &&&\\
                                                                              \end{array}}^n}
                     \end{array}\right|
                 \end{equation}

\begin{lemma}\label{lem:resbasicprop1}
  Let $ f,g,\zeta_i,\eta_j $ as above. Set
  \[
   t_f=\min (|a_n|,1),\quad t_g=\min (|b_m|,1),\quad T_f=
    \max_i(\max |a_i|,1),\quad T_g=
    \max_j(\max |b_j|,1),
    \]
    \[
    R_f=t_f^{-1}T_f m,\quad R_g=t_g^{-1}T_g n.
  \]
  The following statements hold.

  $ (0) $    $ \max |\zeta_i|\le R_f,\quad \max |\eta_j|\le R_g $.

  $ (1) $
  If
  \[
    \big|\Res(f, g)\big|<\delta^{m}t_g^{n},\quad 0<\delta<1,
  \]
  then there exists $j$ such that
  \[
    \big|f(\eta_j)\big|< \delta.
  \]
  In particular, there exists
  $|z|\le R_g$ such that $ \max ( \big|f(z)\big|,\big|g(z)\big|)< \delta $.

  $ (2) $
  If there exists $z$ such that with $s=\max (m,n)$, $t=\min (t_f,t_g)$ holds
  \[
    \max [ \big|f(z)\big|,\big|g(z)\big|]<t\delta^{s},\quad 0<\delta<1,
  \]
  then
  \[
    \big|\Res(f, g)\big|< t^{2s}(2R)^{s^2}\delta,
  \]
  $R=\max (R_f,R_g)$.
\end{lemma}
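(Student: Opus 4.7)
The proof will handle the three parts in order; parts (0) and (1) are warm-ups while part (2) contains the main work. For (0) I will run the classical polynomial root bound: for $|z| > R_f = t_f^{-1}T_f m$, the assumption $|z| \ge 1$ together with $|a_n| \ge t_f$ and $|a_i| \le T_f$ ensures that $|a_n z^n|$ strictly exceeds the sum of the lower-order contributions (each at most $T_f|z|^{n-1}$, and there are $n \le m$ of them), so $f(z) \ne 0$, giving $|\zeta_i| \le R_f$; the bound for $\eta_j$ is identical. For (1), apply the product identity $\Res(f,g) = (-1)^{mn}b_m^n\prod_j f(\eta_j)$; taking moduli and using $|b_m| \ge t_g$ yields $\prod_j |f(\eta_j)| \le |\Res(f,g)|/t_g^n < \delta^m$, so some $|f(\eta_j)| < \delta$. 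This $\eta_j$ is a root of $g$, with $|\eta_j| \le R_g$ by part (0), establishing both claims.

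Part (2) is the substantive step. The plan is to exploit the complementary product formula $\Res(f,g) = a_n^m b_m^n \prod_{i,j}(\zeta_i - \eta_j)$ and show that the hypothesis forces one pair $(\zeta_{i_0}, \eta_{j_0})$ to be close. First, I verify that the given $z$ is bounded, say $|z| \le 2R$, by rerunning the root-bound argument: if $|z|$ substantially exceeded $R_f$, then $|f(z)|$ would dominate $t\delta^s$, contradicting the hypothesis. Second, from $|f(z)| = |a_n|\prod_i |z - \zeta_i| < t\delta^s$ and $|a_n| \ge t_f \ge t$, I obtain $\prod_i|z - \zeta_i| < \delta^s$, so some $\zeta_{i_0}$ satisfies $|z - \zeta_{i_0}| < \delta^{s/n} \le \delta$; symmetrically $|z - \eta_{j_0}| < \delta^{s/m} \le \delta$, so $|\zeta_{i_0} - \eta_{j_0}| < 2\delta$ by the triangle inequality. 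Third, substitute into the product formula: the distinguished pair supplies the factor $2\delta$, each of the remaining $mn-1$ differences $|\zeta_i - \eta_j|$ is bounded by $2R$ via part (0), and the leading-coefficient prefactor $|a_n|^m |b_m|^n$ is controlled using $t_f, t_g$ (when the relevant leading coefficients are $\le 1$) or $T_f, T_g \le R$ (otherwise).

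The principal obstacle is reconciling the exponents in the final accounting to arrive at exactly $t^{2s}(2R)^{s^2}\delta$. A naive combination yields a bound of order $(2R)^{mn + m + n - 1}\delta$ without any $t$-factor, which is looser than the target; to recover $t^{2s}$ I will employ a case analysis on the regimes $|a_n|, |b_m| \le 1$ versus $> 1$, using that in the small-leading-coefficient regime $|a_n|^m|b_m|^n = t_f^m t_g^n$ already contributes a $t^{m+n}$-type factor, while in the opposite regime the product $T_f^m T_g^n$ is absorbed into $(2R)^{m+n}$, and the elementary inequality $mn+m+n \le s^2 + 2s$ (together with a refined use of $R \ge t^{-1}$ in the dominant regime) should close the gap with a harmless numerical constant.
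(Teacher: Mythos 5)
Your parts (0) and (1), and the core of your part (2) --- pigeonholing a root $\zeta_{i_0}$ of $f$ and a root $\eta_{j_0}$ of $g$ within $\delta$ of the given point $z$ and then estimating all remaining factors $|\zeta_i-\eta_j|$ by $2R$ in the product formula \eqref{eq:resdef} --- are exactly the paper's argument. (A typo-level remark: in (0) you invoke ``$n\le m$'', which is not assumed; the argument naturally yields $|\zeta_i|\le t_f^{-1}T_f\,n$, i.e.\ with $n$ in place of $m$. The same $m\leftrightarrow n$ swap already occurs between the paper's statement of $R_f,R_g$ and its own proof of (0), and is not the real issue.)

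The genuine gap is the final accounting in (2), which you defer with ``should close the gap'': it cannot be closed in the stated form, and no case analysis on the sizes of $a_n,b_m$ will produce the factor $t^{2s}$. Since $t\le 1$, that factor strengthens the conclusion, and in the mixed regime $|a_n|\ge 1$, $|b_m|=t_g=t\ll1$ the stated bound is simply false. Concretely, take $f(z)=100z-100$, $g(z)=0.01z-0.011$, $z=1$, $\delta=0.2$: then $m=n=s=1$, $t=0.01$, $R=100$, the hypothesis holds because $\max(|f(1)|,|g(1)|)=10^{-3}<t\delta=2\cdot10^{-3}$, yet $|\Res(f,g)|=|a_1b_0-a_0b_1|=0.1>t^{2}(2R)\delta=4\cdot10^{-3}$. (The paper's own one-line proof asserts $|\Res(f,g)|\le t^{2s}(2R)^{s^2-1}|\zeta_{i_0}-\eta_{j_0}|$ without justification, so it glosses the same point.) What your route does prove, and what you should state, is
\begin{equation*}
  |\Res(f,g)|\le |a_n|^m|b_m|^n(2R)^{mn-1}\cdot 2\delta\le (2R)^{s^2+2s}\delta,
\end{equation*}
using $|a_n|\le T_f\le R$, $|b_m|\le T_g\le R$, $2\le 2R$ and $mn+m+n\le s^2+2s$. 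This weaker exponent is all that is needed downstream: in the proof of Theorem~C the lemma is applied to monic Weierstrass polynomials, where $|a_n|=|b_m|=t=1$ and your ``naive'' bound already gives $(2R)^{mn-1}\cdot2\delta\le(2R)^{s^2}\delta=t^{2s}(2R)^{s^2}\delta$. So drop the proposed case analysis and either prove the $(2R)^{s^2+2s}\delta$ version or restrict to the monic case; chasing the printed constant is chasing something unattainable.
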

\begin{proof}
  (0) follows by noting that, for example,
  \begin{equation*}
    |a_n||\zeta_i|^n\le (\max |a_i|)(|\zeta_i|^{n-1}+\dots+|\zeta_i|+1)\le (\max |a_i|) n\max(|\zeta_i|^{n-1},1).
  \end{equation*}
  (1) follows by contradiction from \cref{eq:resdef}. For (2) note that there must exist
  $ \zeta_{i_0}, \eta_{j_0} $ such that $ |z-\zeta_{i_0}|<\delta $, $ |z-\eta_{j_0}|<\delta $ and therefore,
  using (0) and \cref{eq:resdef},
  \begin{equation*}
    |\Res(f,g)|\le t^{2s}(2R)^{s^2-1}|\zeta_{i_0}-\eta_{j_0}|<t^{2s}(2R)^{s^2}\delta.
  \end{equation*}
\end{proof}

\section{Refinement of the Assumption \cref{eq:cartanmaincond}}\label{secCar2sets}

We give a simple argument showing that by making some small adjustments we actually have $ K_1\le H_0^C $ in
\cref{eq:cartanmaincond}.

\begin{lemma}\label{lem:K1-sa-bound}
  Using the notation and assumptions of \cref{thm:C} we have that
  \begin{equation*}
    \cN(F,\epsilon_0/2)\cap B(0,1/2)\in \Car_{2,0}(H_1/2,H_0^C)
  \end{equation*}
  where $ C $ is some large absolute constant.
\end{lemma}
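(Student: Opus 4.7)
First, apply \cref{lem:6.3c} simultaneously to $f_1,f_2$ (see \cref{rem:Weier-simultaneous}(2)) to choose a direction $\fe_1$ that is $\um$-regular for both functions with $\um\gtrsim -CB_0$, and invoke the Weierstrass preparation \cref{lem:6.3} to factor $f_i(z,w)=P_i(z,w)g_i(z,w)$ on the polydisk $\cP':=\cD(0,\rho_0)\times \cD(0,r_1)$ with $\rho_0\in(1/8,1/4)$, $r_1\asymp e^{-CB_0}$, $k_i:=\deg_z P_i\lesssim B_0$, and $|g_i|\ge e^{-CB_0}$. Let $R(w):=\Res_z(P_1(\cdot,w),P_2(\cdot,w))$, which is analytic on $\cD(0,r_1)$. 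Direct coefficient estimates yield the upper bound $M:=\sup_{\cD(0,r_1)}\log|R|\lesssim B_0^2$, while for $(z,w)\in \cN_F(\epsilon_0/2)\cap\cP'$ the factorization gives $|P_i(z,w)|\le e^{-H_0+CB_0}$, and then \cref{lem:resbasicprop1}(2) forces $\log|R(w)|\lesssim B_0^2-H_0/B_0$, which is very negative.

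The crucial step is to exhibit a point $w_*\in \cD(0,r_1/2)$ with $\log|R(w_*)|\ge -cH_1$. I argue by contradiction: if $|R(w)|<e^{-cH_1}$ on some disk of radius $\gtrsim e^{-H_1/2}$, then by \cref{lem:resbasicprop1}(1), for every $w$ in this disk there is a root $\eta_j(w)$ of $P_2(\cdot,w)$ with $|P_1(\eta_j(w),w)|$ small enough that, after absorbing the factor $|g_i|\le e^{CB_0}$ supplied by \cref{eq:g-ub}, one has $(\eta_j(w),w)\in \cN_F(\epsilon_0)$. The resulting two-real-dimensional subset of $\cN_F(\epsilon_0)\cap B(0,1/2)$ has area $\gtrsim e^{-H_1}$, whereas any $\Car_{2,0}(H_1,K_1)$ cover intersects a one-complex-dimensional analytic piece in total area at most $K_1 e^{-2H_1}$. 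Since $\log K_1\ll H_1$ gives $K_1 e^{-2H_1}\ll e^{-H_1}$, this is a contradiction.

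Equipped with the lower bound, \cref{lem:high_cart} with $d=1$ applied to $R$ shows that the set $\cB_w\subset\cD(0,r_1/2)$ where $\log|R(w)|$ is very negative is covered by a $\Car_1(H_1/2,O(H_0))$ family of disks of radius at most $e^{-H_1/2}$. The contrapositive of \cref{lem:resbasicprop1}(2) then yields $\cN_F(\epsilon_0/2)\cap \cP'\subset \cD(0,\rho_0)\times \cB_w$. Running the parallel argument with the roles of $z$ and $w$ interchanged (via a second regular direction) gives $\cN_F(\epsilon_0/2)\cap \cP''\subset \cB_z\times \cD(0,\rho_0')$, and intersecting the two containments produces a $\Car_{2,0}(H_1/2,H_0^{C'})$ cover on the overlap polydisk. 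Globalizing over $B(0,1/2)$ by patching $\lesssim e^{CB_0}$ translated polydisks completes the proof, the $e^{CB_0}$ factor being absorbed into $H_0^C$ under the standing asymptotic $H_0\gg H_1\gg B_0$. The principal obstacle is the lower bound step for $R$: converting the topological covering count $\log K_1\ll H_1$ into the quantitative analytic estimate requires the area comparison sketched above, with careful tracking of the constants from \cref{lem:resbasicprop1} through the $g_i$ factor.
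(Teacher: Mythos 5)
Your plan---re-running the Weierstrass/resultant machinery from the proof of \cref{thm:C} at the scale $\epsilon_0=e^{-H_0}$---breaks down precisely at the step you yourself flag as crucial, and it cannot be repaired there. From $|R(w)|<e^{-cH_1}$, part (1) of \cref{lem:resbasicprop1} (the $P_i$ being monic of degree $k_i\lesssim B_0$) only produces a root $\eta_j(w)$ of $P_2(\cdot,w)$ with $|P_1(\eta_j(w),w)|<e^{-cH_1/k_2}$; multiplying back by $g_i$ gives $|F(\eta_j(w),w)|\lesssim e^{-cH_1/B_0+CB_0}$, which, since $H_1\ll H_0$, is astronomically larger than $\epsilon_0$. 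So the curve $\{(\eta_j(w),w)\}$ need not meet $\cN_F(\epsilon_0)$ at all, the hypothesis \cref{eq:cartanmaincond} says nothing about it, and your area comparison yields no contradiction. In fact the claimed lower bound $\log|R(w_*)|\ge-cH_1$ is false under the standing assumptions: take $f_1=z$ and $f_2=(z-e^{-\sqrt{H_0}})/2$, for which $\cN_F(e^{-H_0})=\emptyset$, so \cref{eq:cartanmaincond} holds with $K_1=1$ and, say, $H_1=H_0^{1/4}$, while $R(w)$ is identically a constant of modulus $e^{-\sqrt{H_0}}$, i.e.\ $\log|R|\equiv-\sqrt{H_0}<-cH_1$. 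The only lower bound extractable from the hypothesis (choosing $w$ off the projections of the $K_1$ balls, as in the proof of \cref{thm:C}) is $\log|R(w)|\gtrsim -B_0H_0$.

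Even granting some lower bound, the final inclusion fails quantitatively. At a point of $\cN_F(\epsilon_0/2)\cap\cP'$, part (2) of \cref{lem:resbasicprop1} gives only $\log|R(w)|\le CB_0^2-cH_0/B_0$, whereas \cref{lem:high_cart} at depth $H_1/2$ with $M-m\gtrsim B_0H_0$ guarantees $\log|R(w)|\gtrsim -CH_1B_0H_0$ off the exceptional disks; since $H_0/B_0\ll H_1B_0H_0$ (recall $H_1B_0^2\gg1$), points of $\cN_F(\epsilon_0/2)$ are not forced into $\cD(0,\rho_0)\times\cB_w$, and even with your hoped-for $M-m\lesssim H_1$ you would need $H_0\gtrsim B_0H_1^2$, which $H_0\gg H_1\gg B_0$ does not provide. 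Moreover, covering $B(0,1/2)$ requires about $e^{CB_0}$ polydisks of size $e^{-CB_0}$, and $e^{CB_0}$ is not absorbable into $H_0^C$: nothing in the hypotheses bounds $B_0$ by $\log H_0$. The paper's proof is of a completely different, softer nature: replace $f_i$ by their Taylor polynomials $f_{i,N}$ of degree $N=CH_0$ (Cauchy estimates give error $<\epsilon_0/100$), so that $\cN_F(\epsilon_0/2)\cap B(0,1/2)\subset\cN_{F_N}(3\epsilon_0/4)\cap B(0,1/2)\subset\cN_F(\epsilon_0)\cap B(0,1/2)$; the middle set is semialgebraic of degree $\lesssim N$, hence has at most $N^C$ connected components, and each component, being connected and covered by at most $K_1$ balls of radius $e^{-H_1}$, has diameter at most $CK_1e^{-H_1}\le e^{-H_1/2}$ and so lies in a single ball of radius $e^{-H_1/2}$. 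It is this component-count-plus-chaining idea, not any resultant argument at scale $\epsilon_0$, that makes the final count independent of $K_1$ and polynomial in $H_0$.
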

\begin{proof}
  Let $ f_{i,N} $ be the degree $ N $ Taylor polynomials (at the origin) associated with $ f_i $, $ i=1,2 $
  (recall that $ F=(f_1,f_2) $). Since $ M_{f_i}(0,1)\le 0 $, a standard application of the Cauchy estimates
  yields that
  \begin{equation*}
    |f_i-f_{i,N}|<\epsilon_0/100
  \end{equation*}
  for $ N=C\log \epsilon_0^{-1}=C H_0 $, $ C\gg 1 $. Let $ F_N=(f_{1,N},f_{2,N}) $. We have
  \begin{multline}\label{eq:sa-approx}
    \cN(F,\epsilon_0/2)\cap B(0,1/2)\subset \cN(F_N,3\epsilon_0/4)\cap B(0,1/2)\\
    \subset \cN(F,\epsilon_0)\cap B(0,1/2).
  \end{multline}
  The set $ \cN(F_N,3\epsilon_0/4)\cap B(0,1/2) $ is semialgebraic of degree less than $ CN $ and therefore it
  has at most $ N^C $ connected components. We refer to \cite[Ch.~9]{Bou05} for a brief review of semialgebraic
  sets and their properties. It follows from our assumptions that $ \cN(F_N,3\epsilon_0/4)\cap B(0,1/2) $ is
  covered by less than $ K_1 $ balls of radius $ \exp(-H_1) $. Therefore, each connected component of
  $ \cN(F_N,3\epsilon_0/4)\cap B(0,1/2) $ can be covered by just one ball of radius smaller than
  \begin{equation*}
    CK_1 \exp(-H_1)\le \exp(-H_1/2)
  \end{equation*}
  (recall that $ \log K_1\ll H_1 $) and so
  \begin{equation*}
    \cN(F_N,3\epsilon_0/4)\cap B(0,1/2)\in \Car_{2,0}(H_1/2,N^C).
  \end{equation*}
  The conclusion now follows from \cref{eq:sa-approx}.
\end{proof}

\section{Proofs of Theorems A,B,C}\label{sec:thmABC}

We start with the proof of \cref{thm:C}.

\begin{proof}[Proof of \cref{thm:C}]
  Take $ \uv_0=(z_0,w_0)\in B(0,1/4) $. By our assumptions
  \begin{equation*}
    B_{f_i}(1/4;\uv_0,1/4),\quad M_{f_i}(\uv_0,1/4)\le 0.
  \end{equation*}
  Due to \cref{lem:6.3c}, we can find unit vectors $ \fe_1,\fe_2$, $ |\langle \fe_1,\fe_2 \rangle|\ll 1 $,
  that are $ \um $-regular at $ \uv_0 $ for both $ f_1,f_2 $ restricted to $ B(\uv_0,1/4) $, with $ \um=-CB_0 $,
  $ C\gg 1 $. Then Lemma~\ref{lem:6.3} applies to both $f_1,f_2$ and to both directions $ \fe_1,\fe_2 $. As in
  Lemma~\ref{lem:6.3}, with a slight abuse of notation we denote by $ f_i(z,w) $ the functions in the coordinates
  with respect to the basis $ \fe_1,\fe_2 $ centered at $ \uv_0 $ and with the obvious rescaling needed to apply
  the lemma.
  Applying Lemma~\ref{lem:6.3} (see also \cref{rem:Weier-simultaneous})
  in the direction of $ \fe_1 $ (and $ \fe_2 $ as the choice of non-collinear direction)
  we can write
  \begin{align*}
    f_i(z,w) &= P_i(z,w) g_i(z,w),\\
    P_i(z,w)
             & = z^{k_i} +a_{i,k_i-1} (w) z^{k_i-1} + \cdots + a_0 (w)
  \end{align*}
  on $ \cP:=\cD(0,\rho_{0})\times \cD(0,r_1) $, $ 1/8<\rho_{0}<1/4 $, $ r_1=\exp(-CB_0) $, where
  the coefficients $ a_{i,j}(w) $ are analytic
  on $ \cD(0,r_1) $, $ g_i $ are analytic and non-vanishing on $ \cP $, the polynomials $ P_i(\cdot,w) $,
  $ w\in \cD(0,r_1) $, have no  zeroes in $ \C\setminus \cD(0,\rho_0) $, and $ k_i\lesssim B_0$. Furthermore,
  using part (d) of \cref{lem:weier},
  \begin{equation}\label{eq:g-lbub}
    -B_0\lesssim    \inf_{\cP}  \log \bigl | g_i \bigr|
    \le \sup_{\cP}\log \bigl | g_i\bigr|\lesssim B_0.
  \end{equation}
  Let
  \[
    R(w)=\Res \left(P_1(\cdot,w),P_2(\cdot,w)\right).
  \]
  Note that by \cref{eq:resdef1}, $ R $ is analytic on $ \cD(0,r_1) $.
  Since we chose $ \uv_0\in B(0,1/4) $, the polydisk $ \cP $ is a subset of
  $ B(0,1/2) $, as a set in the standard coordinates. This allows to use the hypothesis to guarantee that
  there exist points $ \uv_j=(z_j,w_j) $ (expressed in the $ \fe_1,\fe_2 $ coordinates), $ 1\le j \le J\le K_1 $
  such that for
  \begin{equation*}
    (z,w)\in \cP\setminus \left( \bigcup_{j=1}^J B(\uv_j,C\exp(-H_1))\right)
  \end{equation*}
  we have
  \begin{equation*}
    \max(|f_1(z,w)|, |f_2(z,w)|)\ge \exp(-H_0)/\sqrt{2}
  \end{equation*}
  and by \cref{eq:g-lbub}
  \begin{equation}\label{eq:P-lb}
    \max(|P_1(z,w)|,|P_2(z,w)|)\gtrsim \exp(-H_0-CB_0).
  \end{equation}
  Note that we used the radius $ C\exp(-H_1) $ instead of $ \exp(-H_1) $ to account for the distortion under the
  change of coordinates. Since we are assuming that $ H_1\gg B_0 $ and $ \log K_1\ll H_1 $, we can find
  $$ w\in \cD(0,r_1/4)\setminus \bigcup_{j=1}^J \cD(w_j,C\exp(-H_1)).$$ For any such $ w $ \cref{eq:P-lb} holds for
  any $ z\in \cD(0,\rho_0) $ and by part (1) of \cref{lem:resbasicprop1}
  \begin{equation*}
    \log|R(w)|\gtrsim -B_0H_0-B_0^2\gtrsim -B_0H_0.
  \end{equation*}
  Note that by the definition of the resultant \cref{eq:resdef}, we have $ \sup_{\cD(0,r_1)}|R(w)|\le 1 $.
  Take $ H\gg 1 $.
  Applying
  Cartan's estimate, we get
  \begin{equation*}
    \log|R(w)|\gtrsim -HB_0H_0
  \end{equation*}
  for any $$ w\in \cD(0,r_1/4)\setminus \cB,\quad  \cB=\bigcup_{1\le k\le K} \cD(w_k',r_1\exp(-H)),\quad
  K\lesssim B_0 H_0. $$ By part (2) of \cref{lem:resbasicprop1},
  \begin{equation*}
    \max(|P_1(z,w)|,|P_2(z,w)|)\ge \exp(-CHB_0^2H_0)
  \end{equation*}
  for any $ w\in \cD(0,r_1/4)\setminus \cB $ and $ z\in \C $. Using \cref{eq:g-lbub}, we get
  \begin{equation}\label{eq:F-lb}
    |F(z,w)|\gtrsim \exp(-CHB_0^2H_0-CB_0)\ge \exp(-HB_0^2H_0)
  \end{equation}
  for any $ w\in \cD(0,r_1/4)\setminus \cB $ and $ z\in \cD(0,\rho_0) $.
  Applying Lemma~\ref{lem:6.3} again
  in the direction of $ \fe_2 $ (and with $ \fe_1 $ as the choice of non-collinear direction) and repeating the
  above argument we get that there exist  $ 1/8<\tilde \rho_0<1/4 $, $ \tilde r_1=\exp(-CB_0) $, such that
  \cref{eq:F-lb} also holds for any $ z\in \cD(0,\tilde r_1/4)\setminus \tilde \cB $
  and $$ w\in \cD(0,\tilde \rho_0),\quad  \tilde \cB=\bigcup_{1\le \ell \le L} \cD(z_\ell',\tilde r_1\exp(-H)),\quad
  L\lesssim B_0H_0, $$ In particular, \cref{eq:F-lb} holds for any
  \begin{equation*}
    (z,w)\in \cD(0,\tilde r_1/4)\times \cD(0, r_1/4)
    \setminus \left( \bigcup_{k,\ell} \cD(z_\ell',\tilde r_1\exp(-H))\times\cD(w_k',r_1\exp(-H))\right).
  \end{equation*}
  Going back to standard coordinates we obtained that there exist less than $ CB_0^2H_0^2 $
  points $ \uv_j' $ such that
  \cref{eq:F-lb} holds for any
  \begin{equation*}
    (z,w)\in B(\uv_0,\exp(-CB_0))\setminus \left( \bigcup_{j} B(\uv_j',\exp(-H)) \right).
  \end{equation*}
  Since \cref{eq:F-lb} holds outside the initial $ \Car_{2,0} $ set, we only need to apply the above argument
  on $ K_1 $ balls covering the initial set to get the conclusion.
\end{proof}

We will need the following lemma for the proof of \cref{thm:A}.

\begin{lemma}\label{lem:complexzerosetextends}
  Let $f$ be analytic on the ball
  $B(0,1)$, $\cZ=\{\uv \in B(0,1): f(\uv)=0\}$. Let $\cB\in \car_{2,0}(H_1,K_1)$, $H\gg 1$, $ \log K\ll H $.
  If $ 0\in \cZ $, then
  \[
    B(0,1/4)\cap \cZ \setminus \cB\neq \emptyset.
  \]
\end{lemma}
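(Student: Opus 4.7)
The idea is to use Weierstrass preparation to exhibit an explicit $k$-sheeted family of points of $\cZ$ inside $B(0,1/4)$, parametrized by a $w$-variable in a small disk, and then to pick a base-parameter $w^{*}$ whose $w$-coordinate avoids the projection of the covering balls of $\cB$.

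I would first dispose of the trivial case $f\equiv 0$: here $\cZ=B(0,1)$, and the hypotheses $\log K_1\ll H_1$ with $H_1\gg 1$ yield $\mes_{\R^4}(\cB)\lesssim K_1 e^{-4H_1}\ll\mes(B(0,1/4))$, so a point of $B(0,1/4)\setminus\cB$ trivially exists.

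Assuming $f\not\equiv 0$, I would pick an orthonormal basis $\fe_1,\fe_2$ of $\C^2$ such that the univariate analytic function $z\mapsto f(z\fe_1)$ is not identically zero. This function has a finite-order zero at $z=0$ and hence only isolated zeros, so I can pick $\rho_0\in(0,1/8)$ with no zeros on $|z|=\rho_0$; by continuity of $f$ there exists $r_1\in(0,1/8)$ (depending on $f$) such that $f(z\fe_1+w\fe_2)\ne 0$ whenever $|z|=\rho_0$ and $|w|\le r_1$. Invoking \cref{lem:weier} in the $(z,w)$-coordinates with, say, $R_0=1/2$ (the polydisk $\cD(0,R_0)\times\cD(0,R_0)$ lies inside $B(0,1)$ since $R_0\sqrt{2}<1$), I obtain a factorization $f=Pg$ on $\cP':=\cD(0,\rho_0)\times\cD(0,r_1)$, with $g$ non-vanishing and $P(z,w)=z^k+a_{k-1}(w)z^{k-1}+\cdots+a_0(w)$ a Weierstrass polynomial whose roots lie in $\cD(0,\rho_0)$ for each fixed $w$. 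Consequently, for every $w\in\cD(0,r_1)$ some root $\zeta(w)\in\cD(0,\rho_0)$ of $P(\cdot,w)$ exists, and the corresponding point $\zeta(w)\fe_1+w\fe_2$ lies in $\cZ$ with Euclidean norm at most $\sqrt{\rho_0^{2}+r_1^{2}}<\sqrt{2}/8<1/4$, hence in $\cZ\cap B(0,1/4)$.

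To force this point to miss $\cB$, I would project $\cB$ orthogonally onto the $w$-axis (in the basis $\fe_1,\fe_2$): since $\cB$ is covered by at most $K_1$ balls of radius $e^{-H_1}$, this projection lies in a union of at most $K_1$ disks of radius $e^{-H_1}$. Because $\log K_1\ll H_1$ and "$H_1\gg 1$" is understood as large relative to the $f$-dependent quantity $\log(1/r_1)$, the projection cannot cover $\cD(0,r_1/2)$. Picking any $w^{*}\in\cD(0,r_1/2)$ outside it, the whole slice $\{(z,w^{*}):z\in\C\}$ must miss $\cB$, since any ball of the cover containing a point of the slice would have $w^{*}$ in its projected disk. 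Thus $\zeta(w^{*})\fe_1+w^{*}\fe_2$ is the desired point of $B(0,1/4)\cap\cZ\setminus\cB$. The main obstacle is the implicit dependence of $\rho_0,r_1$ on $f$; "$H_1\gg 1$" must be read as allowing $H_1$ to be large relative to such $f$-dependent constants, consistent with the paper's usage of this utility lemma.
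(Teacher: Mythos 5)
Your route is genuinely different from the paper's. The paper argues by contradiction in two lines of soft analysis: if $B(0,1/4)\cap\cZ\subset\cB$, then, since the radial projection of $\cB$ onto $[1/8,1/4]$ has measure at most $2Ke^{-H}\ll 1/8$, one can choose $r\in(1/8,1/4)$ so that a neighborhood of the sphere $\{|\uv|=r\}$ is disjoint from $\cB$; consequently $\cZ\cap B(0,r)$ is a \emph{compact} analytic subset of $B(0,r)$, hence a finite set, which is impossible because $0\in\cZ$ and zeros of analytic functions of two variables are never isolated. Crucially, this argument needs only $H$ larger than an absolute constant together with $\log K\ll H$; no quantity depending on $f$ enters.

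Your Weierstrass construction has two gaps. First, a small one: you never invoke the hypothesis $0\in\cZ$, but you need it to ensure the Weierstrass polynomial has degree $k\ge 1$ (since $g\neq0$ and $f(0,0)=0$ force $P(0,0)=0$); without it $P\equiv 1$, there is no root $\zeta(w)$, and indeed the statement is false. Second, and more seriously, your radii $\rho_0,r_1$ are purely qualitative and depend on $f$, so your slice-selection step (needing $Ke^{-H}\ll r_1$) only proves the conclusion for $H$ exceeding an $f$-dependent threshold, and your closing claim that this is ``consistent with the paper's usage'' is not correct. In the proof of part (1) of \cref{thm:A} the lemma is applied to the rescaled restriction of $f_2$ to an arbitrary ball $B(\uv_0,R)$ with $\uv_0\in\cZ$ possibly singular, with the specific choice $H\simeq\max(\log R^{-1},C_0)$, and the resulting bound $\lesssim\max(\log R^{-1},C_0)B_0^2H_0$ is linear in $H$; a threshold depending on Weierstrass radii of the rescaled $f_2$ (which degenerate in an uncontrolled way as $\uv_0$ and $R$ vary, precisely the uniformity the theorem is designed to achieve) would degrade or destroy that estimate. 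Even if you quantified your preparation step via regular directions and \cref{lem:6.3}, you would get $r_1=\exp(-CB_0)$ and hence need $H\gtrsim B_0$, which the choice $H\simeq\max(\log R^{-1},C_0)$ with $C_0=\log(K_1B_0^2H_0^2)$ does not guarantee. So the argument must be replaced or supplemented by one with an absolute threshold, such as the paper's compactness/finiteness argument.
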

\begin{proof}
  We argue by contradiction. Assume $ B(0,1/4)\cap \cZ \subset \cB $. By the assumptions on $ \cB $ we can find
  $ 1/8<r<1/4 $ such that $ \cB\cap B(0,r) $ is compactly contained in $ \cB $. Therefore the zero set of $ f $
  restricted to $ B(0,r) $ is compactly contained in $ B(0,r) $ and $ \cZ\cap B(0,r) $ is a compact analytic
  variety in $ \C^2 $. This cannot be, because compact analytic varieties in $ \C^2 $ are necessarily
  finite sets (see for example \cite{Chi89}% Ch.1 Sec. 3.3 Prop. 1
  ) and analytic functions of several variables cannot have
  isolated zeros (recall that $ 0\in \cZ $).
\end{proof}

\begin{proof}[Proof of \cref{thm:A}]
  (1)
  Take $ \uv_0\in B(0,1/8)\cap \cZ $, $ \cZ=\{ f_2=0 \} $, $ 0<R\le 1/4 $. Let $ H=C\max(\log R^{-1},C_0) $
  with $ C $ large enough (recall that $ C_0=\log (K_1 B_0^2 H_0^2) $). By \cref{thm:C} we have
  \begin{equation*}
    |F(\uv)|\ge \exp(-HB_0^2H_0)
  \end{equation*}
  for all $ \uv\in B(0,1/4)\setminus \cB $, $ \cB\in \Car_{2,0}(H,K) $, $ K\lesssim K_1B_0^2H_0^2 $.
  Note that $ B(\uv_0,R)\subset B(0,1/4) $ and our choice of $ H $ is such that we can apply
  \cref{lem:complexzerosetextends} to $ f_2 $ restricted to $ B(\uv_0,R) $ and the above $ \cB $ (after an obvious
  rescaling). So,
  there exists $ \uv_1\in \cB(\uv_0,R/4)\cap \cZ\setminus \cB $. Note that we have
  \begin{equation*}
    |f_1(\uv_1)|=|F(\uv_1)|\ge \exp(-HB_0^2H_0)
  \end{equation*}
  and therefore
  \begin{equation*}
    M_{f_1}(\cZ,\uv_0,R/4)\ge -HB_0^2 H_0.
  \end{equation*}
  The first statement now follows by recalling that
  \begin{equation*}
    M_{f_1}(\cZ,\uv_0,R)\le M_{f_1}(0,1)\le 0.
  \end{equation*}

  (2) Take $ \uv_0\in B(0,1/8) $. By our assumptions
  \begin{equation*}
    B_{f_2}(1/4;\uv_0,1/4)\le B_0,\quad M_{f_2}(\uv_0,1/4)\le 0.
  \end{equation*}
  Due to \cref{lem:6.3c}, we can find a unit vector $ \fe_1$,
  that is $ \um $-regular at $ \uv_0 $ for $ f_2 $ restricted to $ B(\uv_0,1/4) $, with $ \um=-CB_0 $,
  $ C\gg 1 $. Let $ \fe_2 $ be another unit vector orthogonal to $ \fe_1 $.
  As in
  Lemma~\ref{lem:6.3}, with a slight abuse of notation we denote by $ f_i(z,w) $ the functions in the coordinates
  with respect to the basis $ \fe_1,\fe_2 $ centered at $ \uv_0 $ and with the obvious rescaling needed to apply
  the lemma.   Applying Lemma~\ref{lem:6.3}
  in the direction of $ \fe_1 $ (with $ \fe_2 $ as the choice of non-collinear direction)
  we can write
  \begin{align*}
    f_2(z,w) &= P(z,w) g(z,w),\\
    P(z,w)
             & = z^{k} +a_{k-1} (w) z^{k-1} + \cdots + a_0 (w)
  \end{align*}
  on $ \cP:=\cD(0,\rho_{0})\times \cD(0,r_1) $, $ 1/8<\rho_{0}<1/4 $, $ r_1=\exp(-CB_0) $, where
  the coefficients $ a_{j}(w) $ are analytic
  on $ \cD(0,r_1) $ and $ k\lesssim B_0 $. Since we also have that  $ g $ is analytic and non-vanishing on $ \cP $,
  \begin{equation*}
    \cZ\cap \cP=\cZ_P\cap \cP,\quad \cZ_P:= \{ (z,w)\in \C\times \cD(0,r_1) : P(z,w)=0 \}.
  \end{equation*}
  It is well known (see \cite{Chi89}) that for any point $ (z,w) $ of the variety
  $ \cZ_P $, there exist  $ \epsilon>0 $, $ \delta>0 $ such that the following statements hold.
  \begin{enumerate}[leftmargin=1.5em,label=(\roman*)]
  \item
    If $ (z,w) $ is a regular point, then there exists an analytic function
    $ \zeta: \cD(0,\epsilon)\to \cD(0,\delta) $
    such that
    \begin{equation*}
      \cZ_P\cap \left( \cD(z,\delta)\times \cD(w,\epsilon) \right)= \{ (z+\zeta(w'-w),w') : w'\in \cD(w,\epsilon) \}.
    \end{equation*}
  \item
    If $ (z,w) $ is a singular point, then there exist integers $ p_i\ge 1 $ and analytic functions
    $ \zeta_i: \cD(0,\epsilon)\to \cD(0,\delta) $, $ 1\le i\le i_0(z,w)\le k $, such that $ \sum_i p_i\le k $ and
    \begin{equation*}
      \cZ_P\cap \left( \cD(z,\delta)\times \cD(w,\epsilon) \right)
      = \bigcup_i\{ (z+\zeta_i((w'-w)^{\frac{1}{p_i}}),w') : w'\in \cD(w,\epsilon) \}.
    \end{equation*}
  \end{enumerate}

  By compactness we can cover $ B(0,1/8)\cap \cZ $ by finitely many polydisks  $ \frac{1}{2}\cP $ (more precisely,
  by their preimages under the change of variables we assumed above) and in turn
  $ \cZ\cap \frac{1}{2}\cP $ can be covered by finitely many polydisks
  $ \cD(z_j,\delta_j)\times \cD(w_j,\epsilon_j/8)$ with $ (z_j,w_j)\in \cZ_P $ and $ \epsilon_j,\delta_j $ as above.
  We will also use $ \zeta_j $ and  $ \zeta_{i,j} $ the functions associated with $ (z_j,w_j) $.
  Let $ r_0>0 $ be the minimum over all the $ \epsilon_j $ needed to cover $ B(0,1/8)\cap \cZ $. Near each
  $ (z_j,w_j) $ we will define
  local charts and show we can control the Bernstein exponent of $ f_1 $ in the local charts. The control over the
  Bernstein exponent will follow from \cref{thm:C} and \cref{prop:bernstein-analytic}. To this end we take
  $ H=C(\log r_0^{-1}) C_0 $, with $ C\gg 1 $ large enough and we note that, with this choice of $ H $,
  \cref{thm:C} guarantees that
  \begin{equation}\label{eq:F-lb-2}
    |F(z,w)|\ge \exp(-HB_0^2H_0),\ \forall (z,w)\in \cP\setminus (\C\times \cB)
  \end{equation}
  where $ \cB $ is a union of disks with the sum of the radii much smaller than $ r_0^k $ (recall that
  $ k\lesssim B_0\ll H_0 $).
  To define the charts we distinguish two cases.

  \noindent (i) $ (z_j,w_j) $ is regular. Let
  \begin{equation*}
    \psi_j(w)=(z_j+\zeta_j(w),w_j+w),\  w\in \cD(0,\epsilon_j).
  \end{equation*}
  It follows from \cref{eq:F-lb-2} that
  \begin{equation*}
    M_{f_1\circ \psi_j}(0,1/4)\ge -HB_0^2H_0.
  \end{equation*}
  By \cref{prop:bernstein-analytic} (recall that $ M_{f_1}(0,1)\le 0 $) it is clear that
  \begin{equation*}
    B_{f_1\circ \psi_j}(1/4;z,r)\lesssim H\le C(f_2)C_0 B_0^2H_0
  \end{equation*}
  when $ \cD(z,r)\subset \cD(0,\epsilon_j/8) $. This shows the conclusion of part (2) holds if we define the local
  chart by rescaling $ \psi_j\vert_{\cD(0,\epsilon_j/8)} $.

  \noindent (ii) $ (z_j,w_j) $ is singular. Let
  \begin{equation*}
    \psi_{i,j}(w)=(z_j+\zeta_{i,j}(w),w_j+w^{p_i}),\ w\in \cD(0,\epsilon_j).
  \end{equation*}
  It follows from \cref{eq:F-lb-2} that
  \begin{equation*}
    M_{f_1\circ \psi_{i,j}}(0,1/4)\ge -HB_0^2H_0
  \end{equation*}
  (recall that $ p_i\le k $) and therefore
  \cref{prop:bernstein-analytic} guarantees that
  \begin{equation*}
    B_{f_1\circ \psi_{i,j}}(1/4;z,r)\lesssim HB_0^2H_0\le C(f_2)C_0 B_0^2H_0
  \end{equation*}
  when $ \cD(z,r)\subset \cD(0,\epsilon_j/8) $. This shows that the conclusion holds if corresponding to each
  $ w\in \cD(0,\epsilon_j/8)\setminus \{ 0 \} $ we define a local chart  by rescaling
  $ \psi_{i,j}\vert_{\cD(w,r)} $, where $ \cD(w,r) $ is the largest disk about $ w $ in $ \cD(0,\epsilon_j/8) $ on
  which $ w^{p_i} $ is one-to-one.

  Clearly the above charts cover $ \reg \cZ \cap B(0,1/8) $ and we can complete an atlas of $ \reg \cZ $ by adding
  charts whose ranges don't intersect $ B(0,1/8) $. This concludes the proof.
\end{proof}

Next we give an example showing that the $ \log R^{-1} $ is actually necessary in part (1) of \cref{thm:A}.

\begin{example}\label{ex:logR}
  Let
  \begin{equation*}
    f_1(z,w)=z^2+w,\quad f_2(z,w)=zw,\quad \cZ= \{ f_2=0 \}.
  \end{equation*}
  Let $ R\ll 1 $, $ \uv_0=(R/4,0) $. Then straightforward computations show that
  \begin{equation*}
    \sup_{\cB(\uv_0,R/4)\cap \cZ}\log|f_1(z,w)|=\sup_{|z-R/4|<R/4}\log|z^2|=\log \left( \frac{R}{2} \right)^2,
  \end{equation*}
  and
  \begin{multline*}
    \sup_{\cB(\uv_0,R)\cap \cZ}\log|f_1(z,w)|
    =\max \left( \sup_{|z-R/4|<R}\log|z^2| ,\sup_{|w|^2+(R/4)^2<R^2}\log|w| \right)\\
    =\max \left( \log \left( \frac{5R}{4} \right)^2,\log \frac{\sqrt{15}R}{4} \right)
    = \log \frac{\sqrt{15}R}{4},
  \end{multline*}
  provided $ R $ is small enough ($ R<1/2 $ is enough). Therefore,
  \begin{equation*}
    B_{f_1}(1/4;\cZ,\uv_0,R)= C+\log R^{-1}.
  \end{equation*}
\end{example}

Finally, we will prove \cref{thm:B}, but we first establish an auxilliary result.
To this end we will need the following extension of the classical  B\'ezout theorem.
Suppose we have a system of $ n $ complex polynomial equations $ f_i(z_1,...,z_n)=0 $, $i=1,..,n$.
Let $ \cZ_1,\ldots,\cZ_s $ be the irreducible components of the variety defined by the system.
Then
\begin{equation}\label{eq:Bezout}
  \deg(\cZ_1)+\dots+\deg(\cZ_s)\le \deg f_1\times \dots \times \deg f_n.
\end{equation}
The authors are grateful to J\'anos Koll\'ar  and Mihnea Popa for pointing out this version of the B\'ezout
bound (for a more general result see \cite[Thm.~12.3]{Ful98}).

\begin{lemma}\label{lem:bezoutpuiseoux1}
  Let $f(z,w)$, $g(z,w)$ be non-constant polynomials with no common factors.
  Let $\zeta(w)$ be an analytic function on $ \cD(0,r_0)$
  such that
  \begin{equation}\label{eq:regularity-hyp}
    \{ (\zeta (w),w) : w\in \cD(0,r_0) \}\subset \reg \{ f(z,w)=0 \}.
  \end{equation}
  Then there exists at most one straight line $\cL\subset \C$ through
  the origin such that
  \begin{equation}\label{eq:bezoutpuiseux2}
    \#\{\xi\in (-r_0,r_0):g(\zeta(\xi ),\xi )\in \cL\}> (\deg f)^2\deg g.
  \end{equation}
\end{lemma}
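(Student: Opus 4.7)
My plan is to recast $\#\{\xi\in(-r_0,r_0):g(\zeta(\xi),\xi)\in\cL\}$ as a subset of the $0$-dimensional part of an explicit complex algebraic variety in $\C^4$ and to bound its cardinality by the extended B\'ezout estimate $\cref{eq:Bezout}$.

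Fix $\alpha\in\C^*$ with $\cL=\{t\alpha:t\in\R\}$, so that $u\in\cL\Leftrightarrow\bar\alpha u-\alpha\bar u=0$. Writing $z=x_1+ix_2$, $w=y_1+iy_2$ and passing to the invertible linear change of variables $(Z,\bar Z,W,\bar W):=(z,\bar z,w,\bar w)$ regarded as four independent complex coordinates on $\C^4$, the conditions $f(z,w)=0$, $\Im w=0$, $g(z,w)\in\cL$ become the polynomial system
\begin{gather*}
  f(Z,W)=0,\quad \bar f(\bar Z,\bar W)=0,\\
  W-\bar W=0,\\
  \bar\alpha\, g(Z,W)-\alpha\,\bar g(\bar Z,\bar W)=0,
\end{gather*}
where $\bar f,\bar g$ are the polynomials with conjugated coefficients of $f,g$. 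Their degrees are $\deg f,\deg f,1,\deg g$, so by $\cref{eq:Bezout}$ the sum of the degrees of the irreducible components of the common zero set in $\C^4$ is at most $(\deg f)^2\deg g$; in particular there are at most $(\deg f)^2\deg g$ isolated ($0$-dimensional) components. Every $\xi\in(-r_0,r_0)$ with $g(\zeta(\xi),\xi)\in\cL$ contributes the distinct point $p_\xi:=(\zeta(\xi),\overline{\zeta(\xi)},\xi,\xi)$ to this zero set, so the required bound reduces to showing that each $p_\xi$ is isolated.

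Let $\xi_0$ be one such $\xi$. Because $\{(\zeta(w),w):w\in\cD(0,r_0)\}\subset\reg\{f=0\}$ and is parametrized analytically by $w$, one has $\partial_z f\ne 0$ along the arc (otherwise the tangent to $\{f=0\}$ would point purely in the $z$-direction, contradicting the finiteness of $\zeta'$). Hence $Z=\zeta(W)$ is the unique analytic branch of $f(\cdot,W)=0$ through $(\zeta(\xi_0),\xi_0)$, and by the analogous argument $\bar Z=\bar\zeta(\bar W)$ is the unique analytic branch of $\bar f(\cdot,\bar W)=0$ through $(\overline{\zeta(\xi_0)},\xi_0)$, where $\bar\zeta$ is the function with Taylor coefficients conjugate to those of $\zeta$. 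If $p_{\xi_0}$ lay on a positive-dimensional complex component of the zero set, parametrizing it by $W$ (using $W=\bar W$) and substituting these branches into the fourth equation would force
\begin{equation*}
  \bar\alpha\, h(W)\equiv \alpha\,\bar h(W)\qquad\text{on }\cD(0,r_0),
\end{equation*}
with $h(W):=g(\zeta(W),W)$ and $\bar h(W):=\bar g(\bar\zeta(W),W)$. A direct calculation on the real axis gives $\bar h(\xi)=\overline{h(\xi)}$, so $\bar h$ has Taylor coefficients conjugate to those of $h$, and the identity reduces on $(-r_0,r_0)$ to $h(\xi)\in\cL$ for every $\xi$. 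Call such an $\cL$ \emph{exceptional}.

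Finally, $h\not\equiv 0$: otherwise $g$ would vanish on the one-complex-dimensional open subset $\{(\zeta(w),w)\}$ of the irreducible component $C\subset\{f=0\}$ containing the arc, hence on $C$ itself, forcing the irreducible factor of $f$ defining $C$ to divide $g$ and contradicting the coprimality of $f,g$. With $h\not\equiv 0$, the identity $\bar\alpha h\equiv\alpha\bar h$ determines $\bar\alpha/\alpha$, and therefore the line $\cL$, uniquely. Hence at most one line is exceptional, and for every other line each $p_\xi$ is isolated, completing the proof. The main obstacle is this isolation step --- converting ``isolated in $\R^4$'' into ``isolated in $\C^4$'' where B\'ezout actually operates --- and it is precisely what the regularity hypothesis $\cref{eq:regularity-hyp}$ is designed to handle, by providing unambiguous complex-analytic continuation of both $\zeta$ and $\bar\zeta$ off the real axis.
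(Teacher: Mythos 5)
Your architecture is essentially the paper's in different clothing: you complexify through the conjugate coordinates $(Z,\bar Z,W,\bar W)\in\C^4$ where the paper uses the three real polynomials $\hat P,\hat Q,\hat V$ in complexified $(x,y,\xi)\in\C^3$, you invoke the same extended B\'ezout bound \cref{eq:Bezout} with the same product $(\deg f)^2\deg g$, and your endgame (uniqueness of the exceptional line via $h\not\equiv 0$ and coprimality) is sound. The problem is that the central step --- passing from ``$p_{\xi_0}$ lies on a positive-dimensional component'' to the identity $\bar\alpha h\equiv\alpha\bar h$ --- is asserted, not proved. ``Parametrizing it by $W$'' is exactly the point at issue: a priori $W$ could be constant on that component (it could sit inside $\{W=\bar W=\xi_0\}$, e.g.\ if $(w-\xi_0)$ divided $f$), and even when $W$ is non-constant on the component, $p_{\xi_0}$ may be a singular point of it, so a local branch over $W$ is not automatic. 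This is precisely where the paper's proof does its real work: it produces, via Shiffman's theorem and the parametrization of an irreducible curve by a Riemann surface, an analytic arc inside the component along which $\xi$ is non-constant, and it rules out the constant-$\xi$ degeneration by exploiting that one component contains two solutions with distinct $\xi_j$. Since your pigeonhole only yields a single non-isolated point, you would have to exclude the constant-$W$ case by another route (it can be done: \cref{eq:regularity-hyp} forbids $(w-\xi_0)\mid f$, since otherwise $(\zeta(\xi_0),\xi_0)$ would lie on two distinct local branches of $\{f=0\}$ and hence be singular; then the fiber over $W=\bar W=\xi_0$ is finite and cannot carry a positive-dimensional component), and you still need some device --- an analytic arc, a local parametrization, or at least infinitely many solutions with distinct $W$-values accumulating at $\xi_0$ --- to propagate the fourth equation to the identity on $\cD(0,r_0)$. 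None of this appears in your write-up.

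Moreover, the one justification you do supply is incorrect: regularity of a point of the \emph{set} $\{f=0\}$ does not imply $\partial_z f\neq 0$ there. The lemma does not assume $f$ squarefree, so $f=(z-w)^2$ is admissible: every point of $\{z=w\}$ is a regular point of the zero set while $\nabla f$ vanishes identically on it, so the ``tangent direction'' argument has no content. What you actually need --- that near $(\zeta(\xi_0),\xi_0)$ the zero set of $f$ coincides with the graph of $\zeta$, so any nearby solution satisfies $Z=\zeta(W)$ --- follows directly from \cref{eq:regularity-hyp}, because a one-dimensional complex manifold germ is locally irreducible; this is exactly how the paper uses the hypothesis, and the detour through the gradient should be dropped. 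In short, the skeleton is right and parallel to the paper's, but the isolation/propagation step, which is the heart of the lemma, is a genuine gap as written.
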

\begin{proof}
  Let $ \cL $ be a line through the origin. We first argue that if \cref{eq:bezoutpuiseux2} holds, then we must
  have $ \{ g(\zeta (\xi),\xi): \xi\in (-r_0,r_0) \}\subset \cL $.
  Write
  \begin{equation*}
    f(z,\xi):=P(x+iy,\xi)+iQ(x+iy,\xi)=\hat P(x,y,\xi)+i\hat Q(x,y,\xi)
  \end{equation*}
  where $P,Q$ are the real and imaginary parts of $f$, and
  $\hat P,\hat Q$ are real polynomials of three real variables $x,y,\xi$.
  Clearly $ \deg \hat P=\deg \hat Q= \deg f $.
  Similarly write
  \begin{equation*}
    g(z,\xi):=U(x+iy,\xi)+iV(x+iy,\xi)=\hat U(x,y,\xi)+i\hat V(x,y,\xi).
  \end{equation*}
  Without loss of generality we may assume that the line
  $ \cL $ is horizontal. If \cref{eq:bezoutpuiseux2} holds,
  then the system
  \begin{equation}\label{eq:system}
    \hat P=0,\quad \hat Q=0,\quad \hat V=0
  \end{equation}
  has more than $\deg \hat P\times\deg \hat Q \times \deg \hat V$ solutions $\uv_j=(x_j,y_j,\xi_j)$ with
  \begin{equation*}
    \xi_j\in (-r_0,r_0),\quad x_j+iy_j=\zeta (\xi_j),\quad \xi_{j_1}\neq \xi_{j_2}.
  \end{equation*}
  Complexify the variables $x,y,\xi$, and let $ \cZ_1,\ldots,\cZ_s $ be the irreducible components of the complex
  variety defined by the system \cref{eq:system}. By the B\'ezout bound \cref{eq:Bezout}, there exists a component
  $ \cZ_k $ that contains at least two of the solutions $ \uv_j $ and therefore has dimension at least one.
  Let $ \uv_0 $ be one of the solutions contained in $ \cZ_k $.
  We will argue that there exists an analytic mapping
  \begin{equation}\label{eq:mapping}
    t\to \uv(t)=(x(t),y(t),\xi(t))\in \cZ_k,\  t\in \cD(0,\delta)
  \end{equation}
  such that $ \uv(0)=\uv_0 $ and $ \xi(t) $ is non-constant. By \cite{Shi70} we know that
  there exists a neighborhood $ N $ of $ \uv_0 $ in $ \cZ_k $, such that for any
  $ \uv\in N\setminus \{ \uv_0 \} $, there exists a one dimensional irreducible variety $ \cV $  through both
  $ \uv $ and $ \uv_0 $. Since $ \cV $ can be parametrized by a Riemann surface (see \cite[Prop.~6.2]{Chi89})
  we get the existence of a mapping of the form \cref{eq:mapping}. If $ \xi(t) $ is constant, then by the
  uniqueness theorem (see \cite[Cor.~5.3.2]{Chi89}), we must have $ \cV\subset \{ \xi=\xi_0 \} $.
  If this happens for all such mappings obtained by choosing different $ \uv\in N\setminus \{ \uv_0\} $, then
  $ N\subset \{ \xi=\xi_0 \} $, and by the uniqueness theorem, $ \cZ_k\subset \{ \xi=\xi_0 \} $.
  This would contradict
  the fact that $ \cZ_k $ contains two of the solutions $ \uv_j $ (recall that $ \xi_{j_1}\neq \xi_{j_2} $). So we
  proved the existence of the mapping \cref{eq:mapping} with the desired properties. We have
  \begin{equation*}
    f(x(t)+iy(t),\xi(t))=0,\quad V(x(t)+iy(t),\xi(t))=0,\ t\in \cD(0,\delta).
  \end{equation*}
  By the assumption \cref{eq:regularity-hyp}, we get
  \begin{equation*}
    x(t)+iy(t)=\zeta (\xi(t)),
  \end{equation*}
  provided we choose $ \delta $ small enough. Therefore
  \begin{equation*}
    V(\zeta(\xi(t)),\xi(t))=0,\ t\in \cD(0,\delta)
  \end{equation*}
  and since $ \xi(t) $ is non-constant, there exists $ \epsilon>0 $ so that
  \begin{equation*}
    V(\zeta(\xi),\xi)=0,\ \xi\in (\xi_0-\epsilon,\xi_0+\epsilon).
  \end{equation*}
  So $ V(\zeta (\xi),\xi)=0 $ for all $ \xi\in(-r_0,r_0) $, that is
  $ \{ g(\zeta(\xi),\xi): \xi\in (-r_0,r_0) \}\subset \cL $.

  Now we can finish the proof by arguing by contradiction.
  If the conclusion doesn't hold, it follows that we have
  $ \{ g(\zeta(\xi),\xi): \xi\in (-r_0,r_0) \}\subset \cL_1\cap \cL_2= \{ 0 \} $ and
  therefore the system $ f=g=0 $ has infinitely many solutions. By the classical
  B\'ezout theorem, this would contradict the assumption that $ f $ and $ g $
  don't have common factors.
\end{proof}

\begin{proof}[Proof of \cref{thm:B}]
  Let $ \cZ_1,\ldots,\cZ_s $ be the irreducible components of $ \cZ $. Each of them is the zero set of
  an irreducible factor of $ f_2 $. Let $ f_{2,1},\ldots,f_{2,s} $ be such irreducible factors.
  Fix $ k\in \{ 1,\ldots,s \} $ and $ (z_0,w_0)\in \reg \cZ\cap \cZ_k $.
  We can make a change of variables (as in the proof of part (2) of \cref{thm:A}) such that
  $ (z_0,w_0) $ is mapped to the origin and
   we can find an analytic function
  $ \zeta: \cD(0,\epsilon_0)\to \cD(0,\delta_0) $ so that
  \begin{equation*}
    \phi(w)=(\zeta(w),w),  w\in \cD(0,\epsilon_0)
  \end{equation*}
  is a chart for $ \reg \cZ \cap \cZ_k $ around the origin.

  If $ f_{2,k} $ divides $ f_1 $, then $ f_1 $ vanishes identically on $ \cZ_k $, and its Bernstein exponent
  is $ 0 $ by convention (in any chart). So, we just need to treat the case when
  $ f_{2,k} $ and $ f_1 $ have no common factors. Let $ \psi(w)=f_1(\phi(w)) $. We
  claim that
  \begin{equation}\label{eq:Bern-claim}
    B_\psi(1/4;w_0,R)\le C(f_2) \deg f_1
  \end{equation}
  provided $ \cD(w_0,R)\subset \cD(0,\epsilon_0/8) $. We will check this claim by using the previous lemma and
  \cref{prop:bernstein-analytic}. Let $ a_1,\ldots,a_n $ be the zeros of $ \psi $.
  Since $ f_1 $ and $ f_{2,k} $
  are co-prime, using the classical B\'ezout theorem, we have
  \begin{equation*}
    n\le \deg f_{2,k}\times \deg f_1.
  \end{equation*}
  Factorize
  \begin{equation*}
    \psi(w)=h(w)P(w),\quad P(w)=\prod_{k=1}^n(w-a_k).
  \end{equation*}
  From the proof of
  \cref{prop:bernstein-analytic} (see \cref{eq:Bern-h-P}) we have
  \begin{equation*}
    B_\psi(1/4;w_0,R)\le CR(M_h(0,3\epsilon_0/4)-M_h(0,\epsilon_0/4))+B_P(1/4;w_0,R).
  \end{equation*}
  Recall that $ B_P(1/4;w_0,R)\le n\log 4\le C(f_2)\deg f_1 $. So, to check the claim \cref{eq:Bern-claim}
  we just need to estimate $ M_h(0,3\epsilon_0/4)-M_h(0,\epsilon_0/4) $. Without loss of generality we can
  assume $ h(0)=1 $ and therefore $ M_h(0,\epsilon_0/4)\ge 0 $.
  Since $h $ does not vanish we have
  \[
    h(w)=e^{u(w)+iv(w)},
  \]
  where $u+iv$ is analytic and $ u,v $ are real-valued. Then
  \begin{equation*}
    M:=M_h(0,3\epsilon_0/4)=\sup_{w\in \cD(0,3\epsilon_0/4)}|u(w)|.
  \end{equation*}
  Due to the Borel-Carath\'eodory estimate (see \cite[Thm.~11.1.1]{Lev96})
  \begin{equation*}
    N:=\sup_{w\in \cD(0,7\epsilon_0/8)} |v(w)|\gtrsim M.
  \end{equation*}
  Choose $|\hat w|= 7\epsilon_0/8$ such that $|v(\hat w)|\ge N/2$ and at the same time no root
  $a_k$ falls on the straight line  through $\hat w$ and the origin. This allows us to define the
  continuous functions
  $\theta_k(\xi):=\arg (\xi \hat w-a_k)\in [0,2\pi]$, $\xi\in (-\infty,+\infty)$. Set
  \[
    \theta(\xi)=\sum_{1\le k\le n} \theta_k(\xi).
  \]
  Take $ \theta\in (0,2\pi) $ arbitrary. We have
  \begin{equation*}
    \Im e^{-i\theta}\psi(\xi \hat w )=e^{u(\xi \hat w)}|P(w)|\sin (v(\xi \hat w)+\theta(\xi)-\theta).
  \end{equation*}
  It is clear form this formula that if $ N\gg n $, then for any $ \theta $,
  \begin{equation*}
    \# \{ \xi\in (-7\epsilon_0/8,7\epsilon_0/8) : f_1(\zeta(\xi \hat w),\xi \hat w)\in \cL_{\theta} \}
    \ge N/4,
  \end{equation*}
  where $ \cL_{\theta} $ is the line of angle $ \theta $ through the origin.
  This and Lemma~\ref{lem:bezoutpuiseoux1} imply that we must have
  \[
    N\lesssim (\deg f_{2,k})^2 \deg f_1.
  \]
  Putting the above together we have
  \begin{equation*}
    M_h(0,3\epsilon_0/4)-M_h(0,\epsilon_0/4)\lesssim C(f_2) \deg f_1,
  \end{equation*}
  which completes the proof of claim \cref{eq:Bern-claim}.

  Finally, it is clear that the conclusion holds by choosing the charts to be rescaled versions of
  $ \phi\vert_{\cD(0,\epsilon_0/8)} $, for each $ (z_0,w_0)\in \reg \cZ $.
\end{proof}

\def\cprime{$'$}

\end{document}